\documentclass[11pt,reqno]{amsart}

\usepackage[T1]{fontenc}
\usepackage{lmodern}
\usepackage{amsmath,amssymb,amsthm,mathtools,bm}
\usepackage{graphicx}
\usepackage{microtype}
\usepackage{enumitem}
\usepackage{url}
\usepackage[hidelinks]{hyperref}

\allowdisplaybreaks
\theoremstyle{plain}
\newtheorem{lemma}{Lemma}
\newtheorem{theorem}{Theorem}
\newtheorem*{corollary}{Corollary}

\theoremstyle{definition}
\newtheorem{definition}{Definition}
\newtheorem{remark}{Remark}
\newtheorem{example}{Example}

\newcommand{\R}{\mathbb{R}}

\newcommand{\Z}{\mathbb{Z}}
\newcommand{\cA}{\mathcal{A}}
\newcommand{\cC}{\mathcal{C}}
\newcommand{\cF}{\mathcal{F}}
\newcommand{\cH}{\mathcal{H}}
\newcommand{\cL}{\mathcal{L}}
\newcommand{\cM}{\mathcal{M}}
\newcommand{\cN}{\mathcal{N}}
\newcommand{\Om}{\Omega}
\newcommand{\Upsilonq}{\Upsilon}
\newcommand{\rank}{\operatorname{rank}}
\newcommand{\codim}{\operatorname{codim}}
\newcommand{\spann}{\operatorname{span}}
\newcommand{\const}{\operatorname{const}}

\title[Analytical solutions of a continuous analogue]{ANALYTICAL SOLUTIONS OF ONE CLASS OF VOLTERRA QUADRATIC STOCHASTIC OPERATORS OF A TWO-SEX POPULATION WITH CONTINUOUS TIME}

\author{Rasulov Xaydar Raupovich}

\address{Bukhara State University, Bukhara, Uzbekistan}

\address{V.I.Romanovskiy Institute of Mathematics, Uzbekistan Academy of Science, Tashkent, Uzbekistan}

\email{xrasulov71@mail.ru}

\keywords{quadratic stochastic operators, two-sex population, continuous dynamical system, homotopy analysis method, convergence of the homotopy series}

\begin{document}

\begin{abstract}
This paper investigates analytical solutions of a continuous analogue of a class of Volterra quadratic stochastic operators of a two-sex population using the homotopy analysis method. The positive invariance of the state space of the system is established. By means of Lyapunov functions, inclusions for the $\omega$-limit sets are obtained, together with exponential decay estimates for individual components of the trajectories under the corresponding conditions on the coefficients of the system. Sufficient conditions are obtained for the coincidence of the $\omega$-limit sets of the discrete and continuous models. For the diagonal invariant subset, sufficient conditions are established for the periodicity of all interior non-equilibrium trajectories. By means of a majorant series, a sufficient condition for the absolute and uniform convergence of the homotopy series on $[0,+\infty)$ is obtained under the Hurwitz stability of the reduced Jacobian of the system. In addition, explicit analytical solutions of the Cauchy problem are found for special relations between the heredity coefficients. Computational experiments show a rapid decay of the terms of the homotopy series and good approximation accuracy already when using the first few terms of the expansion.
\end{abstract}

\maketitle

\section{Introduction}

Quadratic stochastic operators (QSOs) describe the evolution of probability distributions in which the new state of the system depends quadratically on the current state. Such models originally arose in population genetics to describe the dynamics of genotype distributions under Mendelian inheritance, and subsequently, owing to their natural biological interpretation and close connection with the theory of nonlinear dynamical systems, they found application in other areas as well.

The foundations of the discrete theory of QSOs were laid by S.~N.~Bernstein \cite{ref1}. Its further development led to the study of models of two-sex populations, systematically presented in the monograph of Yu.~I.~Lyubich \cite{ref2}. Contemporary studies of QSOs and two-sex QSOs are presented in \cite{ref3}-\cite{ref10}, while bifurcations of a discrete epidemic model are considered in \cite{ref11}.

In \cite{ref10}, special subclasses of the two-sex QSOs introduced in \cite{ref2} were singled out, for which substantial results on the dynamics of the corresponding operators were obtained. Let us give the definition needed for the further exposition.

Let
\[
S^{n-1}=\left\{x=(x_1,\ldots,x_n)\in\R^n:\ x_i\ge0,\ \sum_{i=1}^n x_i=1\right\}
\]
be the $-(n-1)$-dimensional simplex.

A state of the population is a pair of probability distributions
\[
x=(x_1,\ldots,x_n)\in S^{n-1},\qquad
y=(y_1,\ldots,y_\nu)\in S^{\nu-1},
\]
defined, respectively, on the sets of female types $\cF=\{\cF_1,\ldots,\cF_n\}$ and male types $\cM=\{\cM_1,\ldots,\cM_\nu\}$. The state space of the population is denoted by
\[
\Om=S^{n-1}\times S^{\nu-1}.
\]

For parents of types $\cF_i$ and $\cM_k$ the coefficient $p_{ik,j}^{(f)}$ equals the probability of birth of a female offspring of type $\cF_j$, while $p_{ik,l}^{(m)}$ is the probability of birth of a male offspring of type $\cM_l$. These coefficients satisfy the conditions
\begin{equation}
\label{eq:1}
\begin{cases}
 p_{ik,j}^{(f)}\ge0,\qquad \displaystyle\sum_{j=1}^n p_{ik,j}^{(f)}=1,\\[1ex]
 p_{ik,l}^{(m)}\ge0,\qquad \displaystyle\sum_{l=1}^{\nu} p_{ik,l}^{(m)}=1.
\end{cases}
\end{equation}
The coefficients may account for recombination, gamete selection, mutation, and differences in fertility, provided the corresponding mechanisms are compatible with the assumed form of the operator.

If $(x,y)$ is the state of the population in the current generation and $(x',y')$ is the state of the next generation, then the evolution operator $W$ has the form
\begin{equation}
\label{eq:2}
W:=
\begin{cases}
\displaystyle x_j'=\sum_{i=1}^n\sum_{k=1}^{\nu}p_{ik,j}^{(f)}x_i y_k, & 1\le j\le n,\\[2ex]
\displaystyle y_l'=\sum_{i=1}^n\sum_{k=1}^{\nu}p_{ik,l}^{(m)}x_i y_k, & 1\le l\le \nu.
\end{cases}
\end{equation}

\begin{definition}
Operator \eqref{eq:2} is called a Volterra quadratic stochastic operator of a two-sex population (VQSOBP) if
\begin{equation}
\label{eq:3}
\begin{cases}
 p_{ik,j}^{(f)}=0, & \text{if }j\notin\{i,k\},\\
 p_{ik,l}^{(m)}=0, & \text{if }l\notin\{i,k\}.
\end{cases}
\end{equation}
\end{definition}

Conditions \eqref{eq:3} mean that the type index of an offspring may coincide only with the type index of one of the parents. From \eqref{eq:1} and \eqref{eq:3} it follows that
\[
p_{ii,i}^{(f)}=p_{ii,i}^{(m)}=1,\qquad 1\le i\le n.
\]
Moreover, $p_{ik,i}^{(f)}=1$, $1\le i\le n<k\le\nu$, since an index $k>n$ does not correspond to a female type.

An arbitrary VQSOBP has the form \cite{ref10}:
\begin{equation}
\label{eq:4}
W:=
\begin{cases}
\displaystyle
x_j'=x_j\left(1+\sum_{\substack{k=1\\k\ne j}}^{\nu}(p_{jk,j}^{(f)}-1)y_k\right)
+y_j\left(\sum_{\substack{i=1\\i\ne j}}^n p_{ij,j}^{(f)}x_i\right), & 1\le j\le n,\\[3ex]
\displaystyle
y_l'=y_l\left(1+\sum_{\substack{i=1\\i\ne l}}^n(p_{il,l}^{(m)}-1)x_i\right)
+x_l\left(\sum_{\substack{k=1\\k\ne l}}^{\nu}p_{lk,l}^{(m)}y_k\right), & 1\le l\le n,\\[3ex]
\displaystyle
y_l'=y_l\left(1+\sum_{i=1}^n(p_{il,l}^{(m)}-1)x_i\right), & n<l\le\nu.
\end{cases}
\end{equation}
Operator \eqref{eq:4} maps $\Om$ into itself, and its iterates
\[
z^{k+1}=W(z^k),\qquad k\in\Z_+,\qquad z=(x,y),
\]
define a discrete dynamical system describing the change of the population's state from generation to generation.

In this paper we study the analytical solutions of the continuous analogue of operator \eqref{eq:4}, defined on $\Om$ by the equality
\[
\dot z(t)=W(z(t))-z(t),\qquad t\ge0.
\]
The continuous analogue is an independent dynamical system and, generally speaking, is not equivalent to the original discrete model. The coincidence of the fixed points of the operator with the equilibria of the continuous system does not imply dynamical equivalence: local stability, periodic regimes, and the global structure of trajectories require separate investigation and comparison (see Lemmas~7--9 and Section~6).

Systems of nonlinear ordinary differential equations (ODEs) arise naturally not only as continuous analogues of discrete population operators, but also directly in models of population dynamics, epidemiology, and other applied problems. For such systems, explicit analytical solutions can usually not be obtained, and therefore a substantial part of their investigation is based on methods of the qualitative theory of dynamical systems.

Thus, in \cite{ref12}, for a nonlinear ``predator--prey'' type system, the stationary points, their stability, phase structure, and bifurcations were investigated. Stochastic population models with regime switching, including the Lotka--Volterra system and a single-species model with the Allee effect, were studied in \cite{ref13,ref14}, where sufficient conditions for extinction and persistence of populations were obtained, along with conditions for positive recurrence, ergodicity, and the existence of stationary distributions.

In \cite{ref15,ref16}, a qualitative analysis of epidemiological models of the spread of the Ebola virus was carried out: equilibrium states and their stability were investigated, threshold conditions for the dynamics of the disease depending on the basic reproduction number $R_0$ were obtained, and bifurcations together with the sensitivity of $R_0$ to model parameters were studied. For planar polynomial systems, conditions for the absence of periodic orbits and estimates for the number of limit cycles in the presence of invariant algebraic curves were obtained in \cite{ref17}.

Qualitative analysis describes the stability and limiting behavior of a system, but usually does not give the explicit form of its trajectories. It is therefore natural to seek exact solutions and to construct convergent analytical representations of the trajectories of nonlinear systems of ODEs.

Various analytical approaches have been developed. In \cite{ref18}, based on spectral expansions of the Koopman operator \cite{ref19}, a connection was established between the topological structure of the phase space and the existence of globally stable attractors of the dynamical system. In \cite{ref20}, an analytical solution of a system of nonlinear differential equations was constructed in the reproducing-kernel space $W_2^2[0,1]$: the solution is represented as a series, and the convergence of its partial sums to the exact solution is proved.

In \cite{ref21}, an approach to constructing exact solutions of nonlinear systems of ODEs was proposed, based on expanding the nonlinear terms in Adomian polynomials and subsequently summing the resulting series using the multiplicative inverse of a power series. In \cite{ref22}, an analytical method was proposed for solving linear homogeneous ODEs of arbitrary order with variable coefficients, based on reducing the equation to a first-order system of ODEs by using jump-transfer matrices and their limiting differential generalization. In \cite{ref23}, solutions of certain nonlinear ODEs with linear and quadratic right-hand sides are represented in the form of exponential series.

In \cite{ref24}-\cite{ref27}, Volterra and non-Volterra quadratic dynamical systems with continuous time were studied: their equilibrium states were found, phase portraits were constructed, numerical solutions were obtained and compared with analytical solutions, represented in particular by power series with recursively computed coefficients.

For SIR epidemic models, various approaches to constructing analytical solutions have been developed. In particular, exact parametric solutions were obtained, and extended models were reduced to Abel's equation \cite{ref28}. Relations for the final size of an epidemic and their solvability were investigated \cite{ref29}, and analytical approximations were constructed by continuing power series using asymptotic approximants \cite{ref30}. For a special modification of the SIR model, elementary solutions in terms of hyperbolic functions were found \cite{ref31}. These results show that the special structure of a nonlinear system, in a number of cases, makes it possible to obtain an explicit solution or to construct its convergent analytical representation.

One of the widely used analytical methods for solving nonlinear differential equations is the homotopy analysis method (HAM) \cite{ref32}. Its essential feature is the introduction of an auxiliary parameter $\hbar$, which makes it possible to control the region and rate of convergence of the homotopy series. In \cite{ref33}, the method was developed for application to nonlinear ODEs and partial differential equations, using various auxiliary linear operators and error estimates for the approximations. In \cite{ref34}, HAM was applied to the construction of analytical solutions of nonlinear differential equations arising in physical problems.

In \cite{ref35}, using the nonlinear Blasius equation as an example, an optimal version of HAM was proposed, employing several convergence-control parameters. The application of HAM to linear and nonlinear Cauchy problems for ODEs and their systems was considered in \cite{ref36,ref37}, and for a nonlinear system arising in a magnetohydrodynamic flow problem, rapidly convergent homotopy series were constructed \cite{ref38}.

The choice of the parameter $\hbar$ significantly affects the convergence of the homotopy series \cite{ref39,ref40}. As noted in \cite{ref40}, determining a suitable value of the convergence-control parameter $\hbar$ and finding its optimal value ensuring the fastest convergence, in general remain open problems of HAM.

\section{Problem statement}

In this paper we study the analytical solution of the Cauchy problem for the continuous analogue of operator \eqref{eq:4}:
\begin{equation}
\label{eq:5}
\begin{cases}
\displaystyle
\dot x_j(t)=x_j(t)\left(\sum_{\substack{k=1\\k\ne j}}^{\nu}(p_{jk,j}^{(f)}-1)y_k(t)\right)
+y_j(t)\left(\sum_{\substack{i=1\\i\ne j}}^n p_{ij,j}^{(f)}x_i(t)\right), & 1\le j\le n,\\[3ex]
\displaystyle
\dot y_l(t)=y_l(t)\left(\sum_{\substack{i=1\\i\ne l}}^n(p_{il,l}^{(m)}-1)x_i(t)\right)
+x_l(t)\left(\sum_{\substack{k=1\\k\ne l}}^{\nu}p_{lk,l}^{(m)}y_k(t)\right), & 1\le l\le n,\\[3ex]
\displaystyle
\dot y_l(t)=y_l(t)\left(\sum_{i=1}^{n}(p_{il,l}^{(m)}-1)x_i(t)\right), & n<l\le\nu,
\end{cases}
\end{equation}
with initial conditions
\[
x_j(0)=x_j^0,\quad 1\le j\le n,\qquad
y_l(0)=y_l^0,\quad 1\le l\le\nu,\qquad
z^0=(x^0,y^0)\in\Om.
\]

The transition from the discrete system to its continuous analogue links the recursive description with a system of differential equations, making it possible to apply methods of the qualitative theory of ODEs, to find analytical solutions, and to obtain quantitative estimates of the rate of convergence. Comparing the discrete and continuous models also makes it possible to establish which dynamical properties are preserved under such a transition.

If $x_j=0$, then $\dot x_j\ge0$, and if $y_l=0$, then $\dot y_l\ge0$. Therefore system \eqref{eq:5} preserves the non-negativity of the coordinates.

Direct summation of the equations, taking into account \eqref{eq:1} and \eqref{eq:3}, gives
\[
\sum_{j=1}^n\dot x_j(t)=0,\qquad \sum_{l=1}^{\nu}\dot y_l(t)=0.
\]
Consequently, for $z^0\in\Om$ both normalizations are preserved, and $\Om$ is positively invariant. The polynomial nature of the right-hand side, together with the compactness of $\Om$ guarantees the global existence and uniqueness of the solution of the Cauchy problem for $t\ge0$. Thus, system \eqref{eq:5} generates a global semiflow on $\Om$.

\section{Limiting behavior, stability, and periodic regimes}

\subsection{Inclusions for the $\omega$-limit set and stability of equilibria}

In this section we study the limiting behavior of trajectories of system \eqref{eq:5} and the stability of some of its equilibria. The main tool is Lyapunov functions.

By $\omega(z^0)$ we denote the $\omega$-limit set of the continuous trajectory with initial state $z^0$.

\begin{definition}
A continuous function $\Phi$, defined on the positively invariant subset of $\Om$, under consideration, is called a Lyapunov function for system \eqref{eq:5} if, along every trajectory $(x(t),y(t))$ in this subset, the finite limit
\[
\lim_{t\to+\infty}\Phi(x(t),y(t))
\]
exists.
\end{definition}

For $1\le k\le n<j\le\nu$ put
\[
q_{kj}=1-p_{kj,j}^{(m)}.
\]
This quantity characterizes the loss of male type $j$ when a maternal type $k$ is crossed with a paternal type $j$.

\begin{lemma}
Let $n<\nu$. Then
\[
\Phi(x,y)=\prod_{j=n+1}^{\nu}y_j
\]
is a Lyapunov function for system \eqref{eq:5}.

If $b_j\ge0$ and $\sum_{j=n+1}^{\nu}b_j=1$, then
\[
\Phi_b(x,y)=\prod_{j=n+1}^{\nu}y_j^{b_j}
\]
is also a Lyapunov function; factors with $b_j=0$ are not taken into account. If
\[
\mu_b=\min_{1\le k\le n}\sum_{j=n+1}^{\nu}b_jq_{kj}>0,
\]
then
\begin{equation}
\label{eq:6}
\Phi_b(x,y)\le \Phi_b(x^0,y^0)e^{-\mu_b t}.
\end{equation}
If
\[
\mu_{0j}=\min_{1\le k\le n}q_{kj}>0,\qquad j=n+1,\ldots,\nu,
\]
then
\[
y_j\le y_j^0e^{-\mu_{0j}t}
\]
and
\[
\omega(x^0,y^0)\subset S^{n-1}\times\Upsilonq,
\qquad
\Upsilonq=\{y\in S^{\nu-1}:y_{n+1}=\cdots=y_{\nu}=0\}.
\]
\end{lemma}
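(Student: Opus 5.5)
The plan is to compute the logarithmic derivative of $\Phi_b$ along trajectories. For $n<l\le\nu$, the third group of equations in \eqref{eq:5} together with $\sum_i x_i=1$ gives
\[
\dot y_l=y_l\sum_{i=1}^n\bigl(p_{il,l}^{(m)}-1\bigr)x_i=-y_l\sum_{k=1}^n q_{kl}x_k .
\]
Hence on the set where $y_l(t)>0$ we have $\frac{d}{dt}\ln y_l=-\sum_k q_{kl}x_k\le 0$, because $q_{kl}=1-p^{(m)}_{kl,l}\in[0,1]$ by \eqref{eq:1}. Integrating this linear equation gives the explicit formula
\[
y_l(t)=y_l^0\exp\Bigl(-\int_0^t\sum_k q_{kl}x_k(s)\,ds\Bigr).
\]
This shows that $y_l$ is either identically zero (when $y_l^0=0$) or positive and nonincreasing for all $t\ge0$. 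The formula also handles the boundary case cleanly, so no separate argument about vanishing factors is needed.

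From this I derive the first two claims. Each $y_j$ with $j>n$ is nonincreasing and nonnegative, so $\Phi=\prod_{j>n}y_j$ is a product of nonincreasing nonnegative functions. Such a product is itself nonincreasing and bounded below by $0$, so its limit exists and $\Phi$ is a Lyapunov function. The same reasoning applies to $\Phi_b$: each factor $y_j^{b_j}$ is nonincreasing for $b_j>0$, and factors with $b_j=0$ are dropped. Continuity of $\Phi_b$ on $\Omega$ is clear since $b_j\ge0$, with the convention $0^{b}=0$ for $b>0$.

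For the estimate \eqref{eq:6}, if some $y_j^0=0$ with $b_j>0$, then $\Phi_b\equiv0$ and there is nothing to prove. Otherwise all relevant factors stay positive, and
\[
\frac{d}{dt}\ln\Phi_b=-\sum_{j>n}b_j\sum_{k=1}^n q_{kj}x_k=-\sum_{k=1}^n x_k\Bigl(\sum_{j>n}b_jq_{kj}\Bigr)\le-\mu_b\sum_k x_k=-\mu_b .
\]
Here the last step uses $x_k\ge0$ and the invariance of $\sum_k x_k=1$ established in Section 2. Integrating this inequality gives \eqref{eq:6}. The single-component bound is the special case $b=e_j$, for which $\mu_b=\mu_{0j}$, and it also follows directly from the explicit formula for $y_l(t)$ above.

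Finally, if $\mu_{0j}>0$ for every $j>n$, then $y_j(t)\to0$ exponentially for each such $j$. Any $\omega$-limit point $(\bar x,\bar y)$ is the limit of $z(t_m)$ along some sequence $t_m\to\infty$, so by continuity $\bar y_j=0$ for all $j>n$. Closedness of $\Omega$ gives $\bar x\in S^{n-1}$, so $\omega(x^0,y^0)\subset S^{n-1}\times\Upsilon$. None of these steps is a real obstacle. The only point needing care is the sign bookkeeping: rewriting $p-1$ as $-q$ and using $\sum_i x_i=1$ to pull the minimum $\mu_b$ outside the sum over $k$.
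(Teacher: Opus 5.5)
Your proposal is correct and follows essentially the same route as the paper. Both arguments rewrite $\dot y_j=-y_j\sum_k q_{kj}x_k\le 0$ for $j>n$ to get monotonicity of $\Phi$ and $\Phi_b$, bound $\frac{d}{dt}\ln\Phi_b\le-\mu_b$ using $x_k\ge0$ and $\sum_k x_k=1$, integrate, and specialize to $b=e_j$ for the coordinatewise decay and the $\omega$-limit inclusion. Your explicit integrating-factor formula for $y_l(t)$ is a slightly cleaner way to handle a vanishing factor than the paper's bare remark that such a factor stays zero.
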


\begin{proof}
For $j>n$,
\[
\dot y_j=-y_j\sum_{k=1}^n q_{kj}x_k\le0.
\]
Hence $y_j$, and with it $\Phi(x,y)$, is non-increasing and bounded below by zero.

If $y_j>0$ for every $j$ with $b_j>0$, then
\[
\frac{\dot\Phi_b}{\Phi_b}
=-\sum_{k=1}^n x_k\sum_{j=n+1}^{\nu}b_jq_{kj}\le-\mu_b.
\]
If one of the factors involved is initially zero, it remains zero and $\Phi_b\equiv0$. Inequality \eqref{eq:6} follows from Gr\"onwall's lemma. The coordinatewise estimates are obtained analogously and give the stated inclusion for the $\omega$-limit set.
\end{proof}

\begin{lemma}
Let $n=\nu$, $1\le r<n$ and
\[
p_{jk,j}^{(f)}+p_{jk,j}^{(m)}<1,\qquad
p_{kj,j}^{(f)}+p_{kj,j}^{(m)}<1
\]
for $1\le j\le r<k\le n$. Then
\[
\Phi(x,y)=\sum_{j=1}^{r}(x_j+y_j)
\]
is a Lyapunov function. If
\[
\Phi_0=\Phi(x^0,y^0)<2
\]
and
\[
\vartheta=
\min_{\substack{1\le j\le r\\r<k\le n}}
\left\{1-p_{jk,j}^{(f)}-p_{jk,j}^{(m)},\,
1-p_{kj,j}^{(f)}-p_{kj,j}^{(m)}\right\}>0,
\]
then
\[
\Phi(x,y)\le \Phi_0e^{-\vartheta(1-\Phi_0/2)t}.
\]
\end{lemma}

\begin{proof}
Put $X=\sum_{j=1}^r x_j$, $Y=\sum_{j=1}^r y_j$, then
\[
\dot\Phi\le-\vartheta[X(1-Y)+Y(1-X)]
\le-\vartheta\Phi\left(1-\frac{\Phi_0}{2}\right).
\]
Since $\Phi_0<2$ this gives the required exponential estimate.
\end{proof}

\begin{lemma}
Suppose there exist $l\in\{n+1,\ldots,\nu\}$, $q\in\{1,\ldots,n\}$ such that
\[
\max_{1\le i\le n}p_{il,l}^{(m)}
\le
\min_{1\le k\le\nu}p_{qk,q}^{(f)}.
\]
Then, on the relative interior of $\Om$,
\[
\Phi(x,y)=\frac{y_l}{x_q}
\]
is a Lyapunov function.

If
\[
\vartheta_1=
\min_{1\le k\le\nu}p_{qk,q}^{(f)}-
\max_{1\le i\le n}p_{il,l}^{(m)}>0,
\]
then
\[
\dot\Phi<0,\qquad \Phi(x,y)\le\Phi_0e^{-\vartheta_1t}.
\]
\end{lemma}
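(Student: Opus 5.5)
The plan is to work with the logarithmic derivative of $\Phi=y_l/x_q$ along a trajectory starting in the relative interior of $\Om$, and to bound it above by the constant $-\vartheta_1$ using only the normalization $\sum_i x_i=\sum_k y_k=1$. Put
\[
M=\max_{1\le i\le n}p_{il,l}^{(m)},\qquad m=\min_{1\le k\le\nu}p_{qk,q}^{(f)} .
\]
The hypothesis is $M\le m$. Note that $m\le p_{qq,q}^{(f)}=1$.

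First I check that $\Phi$ is well defined along such trajectories, i.e.\ that $x_q(t)>0$ and $y_l(t)>0$ for all $t\ge0$. From the last group of equations in \eqref{eq:5}, $\dot y_l=y_l\sum_i(p_{il,l}^{(m)}-1)x_i\ge -y_l$. From the first group, the second term is nonnegative and each coefficient $p_{qk,q}^{(f)}-1\ge-1$, so $\dot x_q\ge -x_q$. Hence $y_l(t)\ge y_l^0e^{-t}>0$ and $x_q(t)\ge x_q^0e^{-t}>0$.

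Next comes the main computation. For $l>n$,
\[
\frac{\dot y_l}{y_l}=\sum_{i=1}^n\bigl(p_{il,l}^{(m)}-1\bigr)x_i\le M-1 .
\]
For $x_q$, I drop the nonnegative term $\frac{y_q}{x_q}\sum_{i\ne q}p_{iq,q}^{(f)}x_i$. Since $p_{qq,q}^{(f)}=1$, the $k=q$ term can be added to the remaining sum at no cost, which gives
\[
\frac{\dot x_q}{x_q}\ge\sum_{k=1}^{\nu}\bigl(p_{qk,q}^{(f)}-1\bigr)y_k\ge m-1 .
\]
Subtracting the two bounds yields
\[
\frac{\dot\Phi}{\Phi}=\frac{\dot y_l}{y_l}-\frac{\dot x_q}{x_q}\le M-m=-\vartheta_1\le0 .
\]

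From this, $\Phi$ is nonincreasing and bounded below by $0$, so $\lim_{t\to+\infty}\Phi$ exists and $\Phi$ is a Lyapunov function. When $\vartheta_1>0$, the bound $\dot\Phi\le-\vartheta_1\Phi<0$ holds because $\Phi>0$ in the interior. Gr\"onwall's lemma then gives $\Phi(x,y)\le\Phi_0e^{-\vartheta_1t}$.

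The step needing the most care is the lower bound for $\dot x_q/x_q$. One has to notice that the $k=q$ term contributes $0$, so that the minimum over all $k\le\nu$ (including $k=q$) is the correct constant. One must also make sure the cross term with $y_q/x_q$ enters with a favorable sign, so that it can be discarded. Everything else is routine.
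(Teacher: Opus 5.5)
Your proposal is correct and follows the same route as the paper: you bound $\frac{d}{dt}\ln\Phi=\dot y_l/y_l-\dot x_q/x_q$ above by $\max_{i}p_{il,l}^{(m)}-\min_{k}p_{qk,q}^{(f)}=-\vartheta_1$, then read off monotonicity and, via Gr\"onwall, the exponential decay. You also supply details the paper leaves implicit, and all of them check out: positivity of $x_q,y_l$ along trajectories, dropping the nonnegative cross term $y_q\sum_{i\ne q}p_{iq,q}^{(f)}x_i$, and the fact that the $k=q$ term vanishes because $p_{qq,q}^{(f)}=1$.
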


\begin{proof}
The logarithmic derivative satisfies
\[
\frac{d}{dt}\ln\Phi
\le
\max_{1\le i\le n}p_{il,l}^{(m)}-
\min_{1\le k\le\nu}p_{qk,q}^{(f)}.
\]
The non-strict condition guarantees that $\Phi$, does not increase, while the strict condition gives its exponential decay.
\end{proof}

We now turn to the stability of equilibria and their basins of attraction. In what follows we assume $2\le n\le\nu$.

Introduce
\[
E_q=(e_q^{(n)},e_q^{(\nu)}),\qquad q=1,\ldots,n,
\]
where $e_q^{(n)}$ and $e_q^{(\nu)}$ are the corresponding unit (coordinate) vectors. Set
\[
\begin{aligned}
a_q&=\max_{\substack{1\le k\le n\\k\ne q}}\left(1-p_{qk,q}^{(f)}\right),
&b_q&=\min_{\substack{1\le i\le n\\i\ne q}}p_{iq,q}^{(f)},\\
c_q&=\max_{\substack{1\le i\le n\\i\ne q}}\left(1-p_{iq,q}^{(m)}\right),
&d_q&=\min_{\substack{1\le k\le n\\k\ne q}}p_{qk,q}^{(m)}.
\end{aligned}
\]
When $c_q+d_q>0$ define
\[
\varrho_q=\frac{a_q+b_q}{c_q+d_q},\qquad
\gamma_q=\frac{b_qd_q-a_qc_q}{c_q+d_q},
\]
and, for $n<\nu$, define
\[
\varepsilon_q=\min_{n<l\le\nu}p_{ql,q}^{(m)}.
\]
Denote
\[
\Upsilon_q=\{(x,y)\in\Om:x_q=y_q=0\},
\]
and let $\cA(E_q)$ denote the set of initial states whose trajectories converge to $E_q$.

\begin{lemma}
Let $a_qc_q<b_qd_q$. If $n<\nu$ assume in addition $\varepsilon_q>0$. Then
\[
\Phi_q(x,y)=(1-x_q)+\varrho_q(1-y_q)
\]
is a Lyapunov function relative to $E_q$. The equilibrium $E_q$ is locally exponentially asymptotically stable, and
\[
\cA(E_q)=\Om\setminus\Upsilon_q.
\]
Since $\codim_{\Om}\Upsilon_q=2$, the equilibrium $E_q$ is almost globally asymptotically stable: its basin of attraction $\cA(E_q)$ has full measure in $\Om$ with respect to Lebesgue measure on the affine hull of $\Om$.
\end{lemma}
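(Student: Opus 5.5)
The plan is a direct Lyapunov computation in the deviation variables
\[
u=1-x_q=\sum_{i\ne q}x_i,\qquad v=1-y_q=\sum_{k\ne q}y_k ,
\]
followed by a LaSalle-type argument on the compact positively invariant set $\Om$. I split $v=V_1+V_2$ with $V_1=\sum_{k\le n,\,k\ne q}y_k$ and $V_2=\sum_{k>n}y_k$; for $n=\nu$ we have $V_2=0$.

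\emph{Step 1: differential inequalities for $u$ and $v$.} From the $q$-th equations of \eqref{eq:5},
\[
\dot u=x_q\sum_{k\ne q}\bigl(1-p_{qk,q}^{(f)}\bigr)y_k-y_q\sum_{i\ne q}p_{iq,q}^{(f)}x_i .
\]
Since $p_{qk,q}^{(f)}=1$ for $k>n$, the first sum involves only $k\le n$. This gives $\dot u\le a_qx_qV_1-b_qy_qu$.

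Similarly,
\[
\dot v=y_q\sum_{i\ne q}\bigl(1-p_{iq,q}^{(m)}\bigr)x_i-x_q\sum_{k\ne q}p_{qk,q}^{(m)}y_k .
\]
This gives $\dot v\le c_qy_qu-d_qx_qV_1-\varepsilon_qx_qV_2$.

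\emph{Step 2: the choice of $\varrho_q$.} Combining the two inequalities,
\[
\dot\Phi_q\le(\varrho_qc_q-b_q)\,y_qu+(a_q-\varrho_qd_q)\,x_qV_1-\varrho_q\varepsilon_q\,x_qV_2 .
\]
A direct check with $\varrho_q=(a_q+b_q)/(c_q+d_q)$ gives $\varrho_qc_q-b_q=a_q-\varrho_qd_q=-\gamma_q$. Here $\gamma_q>0$ precisely because $a_qc_q<b_qd_q$, and $\varrho_q>0$ since $b_qd_q>0$ forces $b_q>0$. Hence
\[
\dot\Phi_q\le-\gamma_q(y_qu+x_qV_1)-\varrho_q\varepsilon_qx_qV_2\le-\kappa\,(y_qu+x_qv)\le0,\qquad \kappa=\min\{\gamma_q,\varrho_q\varepsilon_q\}>0 .
\]
This is where $\varepsilon_q>0$ is used when $n<\nu$. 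Thus $\Phi_q$ is non-increasing and bounded below by $0$, so it is a Lyapunov function, and $\Phi_q=0$ exactly at $E_q$.

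\emph{Step 3: identification of the basin.} This is the delicate step.

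First, $\Upsilon_q$ is invariant: if $x_q=y_q=0$, the $q$-th equations give $\dot x_q=\dot y_q=0$. So $\cA(E_q)\subset\Om\setminus\Upsilon_q$.

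Conversely, on $\Om$ we have $\Phi_q\le1+\varrho_q$, with equality iff $x_q=y_q=0$. So for $z^0\notin\Upsilon_q$ the bound $\Phi_q(z(t))\le\Phi_q(z^0)<1+\varrho_q$ keeps the whole $\omega$-limit set away from $\Upsilon_q$.

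By LaSalle's principle, $\omega(z^0)$ lies in the largest invariant subset of $\{y_qu+x_qv=0\}$. On $\Om$ this set consists of points with either $u=v=0$ (that is, $E_q$) or $x_q=0$. But $x_q=0$ forces $u=1$ and hence $y_q=0$, i.e. such points lie in $\Upsilon_q$. Therefore $\omega(z^0)=\{E_q\}$, which proves $\cA(E_q)=\Om\setminus\Upsilon_q$.

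\emph{Step 4: local exponential stability and full measure.} In the neighbourhood $\{x_q,y_q\ge\tfrac12\}$ of $E_q$ we get $\dot\Phi_q\le-\tfrac{\kappa}{2}(u+v)\le-\kappa'\Phi_q$ with $\kappa'=\tfrac{\kappa}{2}\min\{1,\varrho_q^{-1}\}$. Since $\Phi_q$ is comparable to the distance to $E_q$, Gr\"onwall's lemma gives exponential decay.

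Finally, $\Upsilon_q$ is cut out by two independent linear conditions, so it has codimension $2$ in the $(n+\nu-2)$-dimensional affine hull of $\Om$. Hence it has Lebesgue measure zero, and $E_q$ is almost globally asymptotically stable.
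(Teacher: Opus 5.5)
Your proposal is correct and follows the paper's proof essentially step for step. It uses the same deviation variables and differential inequalities; merging the paper's $v$ and $w$ into $1-y_q$ just means the cross term $T_q$ never appears. It then uses the same identity $\varrho_qc_q-b_q=a_q-\varrho_qd_q=-\gamma_q$, the same bound $\Phi_q<1+\varrho_q$ off $\Upsilon_q$ combined with LaSalle, and the same local Gr\"onwall estimate.

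The one detail to add in Step~4 is that the trajectory must stay in $\{x_q,y_q\ge\tfrac12\}$ for Gr\"onwall to apply. You get this, as the paper does, by working in the sublevel set $\{\Phi_q\le\tfrac12\min\{1,\varrho_q\}\}$, which lies inside that neighbourhood and is positively invariant because $\Phi_q$ is non-increasing.
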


\begin{proof}
Put
\[
u=1-x_q,\qquad
v=\sum_{\substack{l=1\\l\ne q}}^n y_l,\qquad
w=\sum_{l=n+1}^{\nu}y_l,
\]
where, when $n=\nu$, $w=0$. Then $\Phi_q=0$ only at $E_q$.

From system \eqref{eq:5}, using the stochastic and Volterra conditions, we obtain
\[
\dot u\le a_qx_qv-b_qy_qu.
\]
For $n<\nu$,
\[
\dot w\le-\varepsilon_qx_qw-T_q,
\qquad
\dot v\le c_qy_qu-d_qx_qv+T_q,
\]
where
\[
T_q=\sum_{\substack{i=1\\i\ne q}}^n\sum_{l=n+1}^{\nu}
\left(1-p_{il,l}^{(m)}\right)x_i y_l.
\]
For $n=\nu$ the term $T_q$ is absent from the estimate for $\dot v$.

Since
\[
b_q-\varrho_qc_q=\varrho_qd_q-a_q=\gamma_q,
\]
we have
\[
\dot\Phi_q\le-\gamma_qy_qu-\gamma_qx_qv-\varrho_q\varepsilon_qx_qw\le0,
\]
where, for $n=\nu$, the last term is absent.

Choose a neighborhood of $E_q$, in which $x_q\ge\delta$ and $y_q\ge\delta$ for some $\delta>0$, and then a sufficiently small $\varepsilon>0$ so that the set $\{\Phi_q\le\varepsilon\}$ is contained in this neighborhood. This set is positively invariant, and on it
\[
\dot\Phi_q\le-\gamma_0\Phi_q,
\]
where
\[
\gamma_0=\delta\min\left\{\gamma_q,\frac{\gamma_q}{\varrho_q},\varepsilon_q\right\}\quad(n<\nu),
\]
and at $n=\nu$,
\[
\gamma_0=\delta\min\left\{\gamma_q,\frac{\gamma_q}{\varrho_q}\right\}.
\]
Since $\Phi_q$ is equivalent to the distance to $E_q$ on $\Om$, this estimate proves local exponential stability.

From the global estimate for $\dot\Phi_q$ it follows that $\dot\Phi_q<0$ outside $\{E_q\}\cup\Upsilon_q$. On $\Upsilon_q$,
\[
\Phi_q=1+\varrho_q,
\]
and $\Upsilon_q$ is positively invariant.

If $(x^0,y^0)\notin\Upsilon_q$, then
\[
\Phi_q(x^0,y^0)<1+\varrho_q,
\]
so the monotonicity of $\Phi_q$ rules out any $\omega$-limit set with $\Upsilon_q$. By LaSalle's invariance principle,
\[
(x,y)\to E_q\qquad\text{as }t\to+\infty.
\]
Hence $\cA(E_q)=\Om\setminus\Upsilon_q$.
\end{proof}

As an example, let us apply Lemma~4 to the continuous analogue of the ninth extreme operator $W_9$ \cite{ref10}:
\[
\begin{cases}
\dot x_1=y_1-x_1,\\
\dot x_2=y_2-x_2,\\
\dot y_1=x_1y_2,\\
\dot y_2=x_2y_2-y_2.
\end{cases}
\]
Its equilibria are $E_1=((1,0),(1,0))$, $E_2=((0,1),(0,1))$. For $q=1$,
\[
a_1=b_1=d_1=1,\qquad c_1=0,
\]
so
\[
\varrho_1=2,\qquad \gamma_1=1,\qquad \Phi_1=x_2+2y_2.
\]
Here
\[
\Upsilon_1=\{E_2\},\qquad \cA(E_1)=\Om\setminus\Upsilon_1.
\]
Thus $E_1$ is locally exponentially and almost globally asymptotically stable. For $E_2$ the condition of Lemma~4 fails; analysis of the reduced Jacobian shows that $E_2$ is unstable.

The conditions of Lemma~4 are convenient for a global estimate. The next result refines the local stability criterion.

For any $q,i\in\{1,\ldots,n\}$, $q\ne i$, denote the two-type face
\[
\Om_{qi}=\{(x,y)\in\Om:x_j=0\ (j\notin\{q,i\}),\ y_l=0\ (l\notin\{q,i\})\}.
\]
Put
\[
\Delta_{qi}=p_{iq,q}^{(f)}p_{qi,q}^{(m)}-
\left(1-p_{qi,q}^{(f)}\right)\left(1-p_{iq,q}^{(m)}\right).
\]

\begin{lemma}
The equilibrium $E_q$ is locally exponentially asymptotically stable relative to $\Om$ if and only if
\[
\Delta_{qi}>0\qquad\text{for all }i\in\{1,\ldots,n\}\setminus\{q\},
\]
and, if $n<\nu$, $\varepsilon_q>0$.

If $\Delta_{qi}<0$ for at least one $i\ne q$, then $E_q$ is unstable. If $\Delta_{qi}=0$ for at least one $i\ne q$ or if $n<\nu$, $\varepsilon_q=0$, then $E_q$ is non-isolated and is not asymptotically stable.
\end{lemma}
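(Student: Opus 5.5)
The plan is to linearize \eqref{eq:5} at $E_q$ in local coordinates adapted to $\Om$ and to show that the Jacobian splits into $2\times2$ blocks (one per $i\ne q$) and scalars (one per $l>n$). Near $E_q$ we eliminate $x_q=1-\sum_{i\ne q}x_i$ and $y_q=1-\sum_{l\ne q}y_l$. We use $(x_i,y_i)_{i\ne q,\,i\le n}$ and $(y_l)_{l>n}$ as coordinates; they range over a neighbourhood of $0$ in a closed orthant. Every product $x_iy_k$ with $i,k\ne q$ is quadratic, so only terms containing $x_q$ or $y_q$ survive linearization. By the Volterra conditions \eqref{eq:3} and \eqref{eq:1} we have $p_{iq,i}^{(f)}=1-p_{iq,q}^{(f)}$, $p_{qi,i}^{(f)}=1-p_{qi,q}^{(f)}$, and similarly for the male coefficients. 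Hence, for $i\ne q$, the linear part is
\[
\dot x_i\approx-p_{iq,q}^{(f)}x_i+\bigl(1-p_{qi,q}^{(f)}\bigr)y_i,\qquad
\dot y_i\approx\bigl(1-p_{iq,q}^{(m)}\bigr)x_i-p_{qi,q}^{(m)}y_i,
\]
and, for $l>n$, $\dot y_l\approx-p_{ql,q}^{(m)}y_l$.

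The reduced Jacobian is therefore block diagonal. Each block $J_{qi}$ is a Metzler $2\times2$ matrix with $\det J_{qi}=\Delta_{qi}$ and $\operatorname{tr}J_{qi}=-(p_{iq,q}^{(f)}+p_{qi,q}^{(m)})\le0$. The remaining eigenvalues are $-p_{ql,q}^{(m)}$, whose maximum over $l>n$ is $-\varepsilon_q$.

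Sufficiency is then immediate. If $\Delta_{qi}>0$, then $p_{iq,q}^{(f)}p_{qi,q}^{(m)}>0$, so the trace is strictly negative and $J_{qi}$ is Hurwitz. Together with $\varepsilon_q>0$ this makes the whole reduced Jacobian Hurwitz. Lyapunov's first method then gives local exponential stability; it applies because the orthant-constrained domain is positively invariant and the vector field extends smoothly to a full neighbourhood.

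For instability when $\Delta_{qi}<0$, we restrict to the two-type face $\Om_{qi}$, which is positively invariant because the Volterra structure never creates absent types. On this face the dynamics is two-dimensional with linearization exactly $J_{qi}$, which has a positive real eigenvalue. Since $J_{qi}$ is Metzler, its Perron eigenvector can be taken nonnegative, so the unstable direction points into $\Om_{qi}$. An unstable-manifold argument (Chetaev's theorem, or the quadratic form built from $J_{qi}$) then produces trajectories in $\Om$ starting arbitrarily close to $E_q$ that leave a fixed neighbourhood. This is the step I expect to need the most care: we must make sure that the boundary constraint of the orthant does not destroy the standard instability argument. Using the invariance of the face and the nonnegative Perron vector is the route I would take first.

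For the degenerate cases we exhibit a continuum of equilibria through $E_q$. On $\Om_{qi}$ the exact equations are
\[
\dot x_i=-p_{iq,q}^{(f)}x_iy_q+\bigl(1-p_{qi,q}^{(f)}\bigr)x_qy_i,\qquad
\dot y_i=\bigl(1-p_{iq,q}^{(m)}\bigr)x_iy_q-p_{qi,q}^{(m)}x_qy_i.
\]
These are linear in $(x_iy_q,\,x_qy_i)$ with coefficient matrix $J_{qi}$. When $\Delta_{qi}=0$ this system has a nontrivial kernel, and the kernel condition defines a curve of equilibria through $E_q$ inside $\Om_{qi}$. If $J_{qi}=0$, the whole face consists of equilibria. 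If $n<\nu$ and $\varepsilon_q=0$, pick $l>n$ with $p_{ql,q}^{(m)}=0$. On the segment $x=e_q^{(n)}$, $y_q+y_l=1$, we get $\dot y_l=y_l(p_{ql,l}^{(m)}-1)=0$, so the entire segment consists of equilibria. In either case $E_q$ is non-isolated, hence not asymptotically stable. Combining the three parts yields the ``if and only if'' statement.
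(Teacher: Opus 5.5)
Your proposal is correct and follows essentially the same route as the paper: a block-diagonal reduced Jacobian with $\det J_{qi}=\Delta_{qi}$ plus the Hurwitz criterion for sufficiency, a positive eigenvector on the invariant face $\Om_{qi}$ for instability, and continua of equilibria for the degenerate cases. Your linear system in $(x_iy_q,x_qy_i)$ is just the paper's system in $x_i/x_q$, $y_i/y_q$ multiplied through by $x_qy_q$. Two details are worth writing out: $\Delta_{qi}<0$ forces $1-p_{qi,q}^{(f)}>0$ and $1-p_{iq,q}^{(m)}>0$, so $J_{qi}$ is irreducible and its Perron vector is strictly positive, not merely nonnegative; and $\ker J_{qi}$ contains the nonnegative vector $\bigl(1-p_{qi,q}^{(f)},\,p_{iq,q}^{(f)}\bigr)$ (or $\bigl(p_{qi,q}^{(m)},\,1-p_{iq,q}^{(m)}\bigr)$ if the first row vanishes), which is what places your curve of equilibria inside $\Om_{qi}$.
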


\begin{proof}
After eliminating the dependent coordinates $x_q,y_q$ the reduced Jacobian of system \eqref{eq:5} has two-dimensional blocks
\[
J_{qi}^{(5)}=
\begin{pmatrix}
-A_{qi}&B_{qi}\\
C_{qi}&-D_{qi}
\end{pmatrix},
\]
where, for $i\ne q$,
\[
A_{qi}=p_{iq,q}^{(f)},\qquad
B_{qi}=1-p_{qi,q}^{(f)},\qquad
C_{qi}=1-p_{iq,q}^{(m)},\qquad
D_{qi}=p_{qi,q}^{(m)},
\]
with $0\le A_{qi},B_{qi},C_{qi},D_{qi}\le1$. For $n<\nu$ there are additional one-dimensional blocks
\[
-p_{ql,q}^{(m)},\qquad l>n.
\]
The block structure of the reduced Jacobian together with the Hurwitz criterion ensures local exponential asymptotic stability whenever the conditions of the first part of the lemma hold. If $\Delta_{qi}<0$, the corresponding block has a positive eigenvalue with a positive eigenvector, so small perturbations along that eigenvector remain within the invariant face $\Om_{qi}$, proving instability of $E_q$.

At $\Delta_{qi}=0$ the equilibrium equations on $\Om_{qi}$ written in the variables $x_i/x_q$, $y_i/y_q$ form a homogeneous linear system with a nonzero nonnegative solution. Consequently, at $\Delta_{qi}=0$ there exist equilibria in $\Om$ that accumulate at $E_q$. Food $n<\nu$ and $\varepsilon_q=0$ the segment of states with a single female type $q$ and male types $q,l$, where $l>n$ and $p_{ql,q}^{(m)}=0$, consists entirely of equilibria. In both cases $E_q$ is non-isolated, so asymptotic stability is impossible.
\end{proof}

The conditions of Lemma~4, namely, for each $i\ne q$,
\[
\left(1-p_{qi,q}^{(f)}\right)\left(1-p_{iq,q}^{(m)}\right)
\le a_qc_q<b_qd_q\le p_{iq,q}^{(f)}p_{qi,q}^{(m)},
\]
imply $\Delta_{qi}>0$. For $n<\nu$ the condition $\varepsilon_q>0$ coincides in both lemmas. Thus Lemma~5 gives an exact local criterion, while Lemma~4, under stronger sufficient conditions, additionally establishes
\[
\cA(E_q)=\Om\setminus\Upsilon_q.
\]

\subsection{Periodic trajectories on the diagonal invariant subset}

The stability results obtained above do not answer the question of the existence of periodic trajectories. Let us consider a corresponding special case.

Let $n=\nu=N_0$, $I_1\subseteq\{1,\ldots,N_0\}$, $|I_1|\ge3$. For $i\ne j$, $i,j\in I_1$ put
\[
a_{ij}^{(f)}=p_{ij,i}^{(f)}+p_{ji,i}^{(f)}-1,\qquad
a_{ij}^{(m)}=p_{ij,i}^{(m)}+p_{ji,i}^{(m)}-1,
\]
and $a_{ii}^{(f)}=0$, $a_{ii}^{(m)}=0$. Denote
\[
A_1^{(f)}=\left(a_{ij}^{(f)}\right)_{i,j\in I_1},\qquad
A_1^{(m)}=\left(a_{ij}^{(m)}\right)_{i,j\in I_1}.
\]
These matrices are skew-symmetric. Introduce
\[
\Delta=\left\{s=(s_i)_{i\in I_1}:s_i>0,\ \sum_{i\in I_1}s_i=1\right\},
\]
\[
\Om_1=\{(x,y):x_i=y_i=s_i,\ i\in I_1,\ s\in\Delta,\ x_q=y_q=0,\ q\notin I_1\}.
\]

\begin{lemma}
Let $A_1^{(f)}=A_1^{(m)}=:A_1$, $\rank A_1=2$, and suppose there exists
\[
C=(C_i)_{i\in I_1}\in\ker A_1,\qquad C_i>0.
\]
Then $\Om_1$ is invariant under system \eqref{eq:5}, and on $\Om_1$ the system takes the form
\begin{equation}
\label{eq:7}
\dot s_i=s_i(A_1s)_i,\qquad i\in I_1.
\end{equation}
Every trajectory with $s^0\in\Delta$, $A_1s^0\ne0$ is non-constant and periodic.
\end{lemma}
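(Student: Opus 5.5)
The plan has two parts. First I will show invariance and derive \eqref{eq:7} by direct substitution. Then I will get periodicity by reducing \eqref{eq:7} to a level curve of a strictly convex function on a two-dimensional plane, using logarithmic first integrals.

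\emph{Invariance and the reduced system.} Take $s(t)$ solving \eqref{eq:7} with $s(0)=s^0\in\Delta$. Put $x_i=y_i=s_i$ for $i\in I_1$ and $x_q=y_q=0$ for $q\notin I_1$. I will check that this is a solution of \eqref{eq:5}; uniqueness of the Cauchy problem for \eqref{eq:5} then gives invariance of $\Om_1$. For $j\in I_1$, the first equation of \eqref{eq:5} becomes
\[
s_j\sum_{k\in I_1,\,k\ne j}\bigl(p_{jk,j}^{(f)}-1\bigr)s_k+s_j\sum_{i\in I_1,\,i\ne j}p_{ij,j}^{(f)}s_i=s_j\bigl(A_1^{(f)}s\bigr)_j .
\]
In the same way, the equation for $y_j$ gives $s_j(A_1^{(m)}s)_j$. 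These two agree because $A_1^{(f)}=A_1^{(m)}$. For $q\notin I_1$, every term on the right-hand side contains $x_q$ or $y_q$, so $\dot x_q=\dot y_q=0$. It remains to see that the flow of \eqref{eq:7} stays in $\Delta$. Skew-symmetry gives $\sum_i\dot s_i=s^{\top}A_1s=0$. Positivity holds because each $s_i$ satisfies a linear equation in $s_i$, so $s_i(t)>0$ for all $t$.

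\emph{First integrals.} In the variables $u_i=\ln s_i$, system \eqref{eq:7} reads $\dot u=A_1s$. Hence $u(t)-u^0\in\operatorname{range}A_1$ for all $t$. Since $A_1$ is skew-symmetric, $\operatorname{range}A_1=(\ker A_1)^{\perp}$, and this plane is two-dimensional because $\rank A_1=2$. So for every $c\in\ker A_1$ the function $\sum_i c_i\ln s_i$ is a first integral, and the orbit lies on the curve
\[
\Gamma=\Bigl\{u\in P:\ F(u):=\sum_{i\in I_1}e^{u_i}=1\Bigr\},\qquad P=u^0+\operatorname{range}A_1 .
\]

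\emph{Geometry of $\Gamma$ (main step).} The restriction of $F$ to the plane $P$ is strictly convex. The key point, and the one needing most care, is compactness of the sublevel set $K=\{u\in P:F(u)\le1\}$. On $K$ every $u_i\le0$. On $P$ the quantity $\sum_iC_iu_i$ equals the constant $\sum_iC_iu_i^0$, since $C\perp\operatorname{range}A_1$. Because all $C_i>0$, this constant together with $u_i\le0$ bounds each $u_i$ from below, so $K$ is compact. The set $K$ is nonempty, since $u^0\in K$. Thus $F|_P$ has a unique minimizer, and $\Gamma$ is the boundary of the compact convex set $K$.

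Next, a point $u\in P$ is critical for $F|_P$ exactly when $s=e^{u}\perp\operatorname{range}A_1$, that is, when $s\in\ker A_1$, that is, when $A_1s=0$. So critical points of $F|_P$ are exactly the equilibria of \eqref{eq:7}. Since $A_1s^0\ne0$, the point $u^0$ is not the minimizer, so the minimum of $F|_P$ is $<1$. Therefore $\Gamma$ is a smooth simple closed curve, diffeomorphic to a circle, on which $\nabla(F|_P)\ne0$. In particular $\Gamma$ contains no equilibria.

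\emph{Conclusion.} The vector field $A_1s$ is tangent to $\Gamma$ and vanishes nowhere on it. By compactness its speed is bounded below by a positive constant. A flow on a circle with nonvanishing velocity moves monotonically around the circle and returns in finite time $T=\oint_\Gamma d\ell/|A_1s|$. Hence the trajectory through $s^0$ is non-constant and $T$-periodic, and by the first step the corresponding trajectory of \eqref{eq:5} in $\Om_1$ is also non-constant and periodic.
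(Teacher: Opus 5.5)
Your proposal is correct and follows the paper's argument in coordinate-free form. The paper writes $A_1=uv^{T}-vu^{T}$ and reduces the diagonal system to a planar Hamiltonian flow with Hamiltonian $Z(\alpha,\beta)=\sum_i s_i^0e^{u_i\alpha+v_i\beta}$. This $Z$ is exactly your $F|_P$ under the parametrization $(\alpha,\beta)\mapsto \ln s^0+\alpha u+\beta v$ of $P$, and the paper then uses the same ingredients you do: strict convexity, coercivity from the positive kernel vector, and $\nabla Z(0,0)\neq 0$ to get a regular closed level curve. Your version simply makes two steps more explicit than the paper does: the coercivity step, via the first integral $\sum_i C_i\ln s_i$, and the invariance of $\Om_1$, via uniqueness of the lifted solution.
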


\begin{proof}
The Volterra condition preserves the vanishing coordinates outside $I_1$, while the equality $A_1^{(f)}=A_1^{(m)}=A_1$ ensures the invariance of the diagonal $x_i=y_i=s_i$. Hence \eqref{eq:7} holds on $\Om_1$.

The skew-symmetry of $A_1$ preserves $\sum_{i\in I_1}s_i=1$, and the equations preserve the positivity of the $s_i$; hence, $\Om_1$ is invariant.

Since $A_1$ is skew-symmetric with $\rank A_1=2$, there exist linearly independent $u,v$, for which
\[
A_1=uv^T-vu^T.
\]
Then
\[
s_i(t)=s_i^0e^{u_i\alpha(t)+v_i\beta(t)},
\]
where
\[
\dot\alpha(t)=Z_\beta(\alpha(t),\beta(t)),\qquad
\dot\beta(t)=-Z_\alpha(\alpha(t),\beta(t)),\qquad
\alpha(0)=\beta(0)=0,
\]
and
\[
Z(\alpha,\beta)=\sum_{i\in I_1}s_i^0e^{u_i\alpha+v_i\beta}.
\]
Hence $Z(\alpha(t),\beta(t))\equiv1$.

From $C\in\ker A_1$, $C_i>0$ it follows that $0$ lies in the interior of the convex hull of the vectors $(u_i,v_i)$. Hence $Z$ is strictly convex and coercive. From $A_1s^0\ne0$ it follows that $\nabla Z(0,0)\ne0$, so $\min Z<1$. Therefore the level set $Z=1$ is a regular simple closed curve. The Hamiltonian vector field does not vanish on it, so motion along this level set is periodic. Consequently, the trajectory $s(t)$ is periodic, and the condition $A_1s^0\ne0$ rules out a constant solution.
\end{proof}

Lemma~6 allows one, for each $N_0\ge3$, to construct an explicit one-parameter family of Volterra quadratic stochastic operators for two-sex populations (VQSOBP) possessing the stated periodicity property.

\begin{corollary}
For any $N_0\ge3$ and $0<\theta\le1$ and any $I_1\subseteq\{1,\ldots,N_0\}$, there exists a VQSOBP with $n=\nu=N_0$, such that all interior non-equilibrium trajectories of its continuous analogue on $\Om_1$ are periodic.
\end{corollary}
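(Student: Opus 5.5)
The plan is to reduce the corollary to Lemma~6 by writing down explicit heredity coefficients. For this construction to apply we need $|I_1|\ge3$; this is a standing assumption in the setup of Lemma~6, so we take it to be implicit in the corollary as well. Write $m=|I_1|$ and label the elements of $I_1$ in increasing order as $i_1<\dots<i_m$.

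We choose a skew-symmetric matrix $A_1$ of rank $2$ with entries in $[-1,1]$ and a strictly positive kernel vector. Take
\[
u=\theta(1,\dots,1),\qquad v=(v_1,\dots,v_m),
\]
where $v_1=1$, $v_2=-1$ and $v_k=0$ for $k\ge3$, and set $A_1=uv^T-vu^T$. Then
\[
a_{i_ki_l}=\theta(v_l-v_k)\in[-2\theta,2\theta].
\]
To keep the entries in $[-1,1]$ we rescale, taking $u=\tfrac{\theta}{2}(1,\dots,1)$ instead. The vectors $u$ and $v$ are linearly independent, so $\rank A_1=2$.

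A positive kernel vector $C$ has to satisfy $u^TC=0$ and $v^TC=0$, which is impossible for this $u$ when $C>0$. So we change the choice: let $u$ and $v$ be any two vectors orthogonal to a fixed positive $C$, for instance $C=(1,\dots,1)$. Take
\[
u=\tfrac{\theta}{4}(1,-1,0,\dots,0),\qquad v=\tfrac{1}{4}(0,1,-1,0,\dots,0).
\]
These are linearly independent because $m\ge3$. Both are orthogonal to $(1,\dots,1)$, so $C\in\ker A_1$. Their entries are small enough that every entry of $uv^T-vu^T$ lies in $[-\tfrac12,\tfrac12]$.

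Next we realize $A_1$ by heredity coefficients. On the set of mixed pairs define
\[
p^{(f)}_{ij,i}=p^{(m)}_{ij,i}=\tfrac{1+a_{ij}}{2},\qquad p^{(f)}_{ij,j}=p^{(m)}_{ij,j}=\tfrac{1-a_{ij}}{2},\qquad i\ne j.
\]
Skew-symmetry gives $p_{ji,j}=\tfrac{1+a_{ji}}{2}=\tfrac{1-a_{ij}}{2}$, so these values are consistent whichever of $i,j$ is listed first. All of them lie in $[0,1]$, the two probabilities for each pair sum to $1$, and all other coefficients are set to $0$. This gives \eqref{eq:1} and \eqref{eq:3}.

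We then check that the construction reproduces $A_1$:
\[
a^{(f)}_{ij}=p_{ij,i}+p_{ji,i}-1=\tfrac{1+a_{ij}}{2}+\tfrac{1-a_{ij}}{2}-1+a_{ij}\cdot 0 .
\]
This computation needs care, because $p_{ji,i}$ is the probability that the offspring is of the type of the second parent. It equals $\tfrac{1-a_{ji}}{2}=\tfrac{1+a_{ij}}{2}$, and therefore $a^{(f)}_{ij}=a_{ij}$. The same holds for the male coefficients, so $A_1^{(f)}=A_1^{(m)}=A_1$. Pairs involving indices outside $I_1$ are handled the same way with $a_{ij}=0$.

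With these choices Lemma~6 applies directly: every trajectory in $\Om_1$ with $A_1s^0\ne0$ is periodic. It remains to match this with the statement. On $\Om_1$ the equilibria of \eqref{eq:7} in the interior of $\Delta$ are exactly the points $s$ with $A_1s=0$. Hence every interior trajectory that is not an equilibrium satisfies $A_1s^0\ne0$ and is periodic, which is the claim.

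The main obstacle is consistent bookkeeping of the indices in $p_{ij,i}$ versus $p_{ji,i}$, since this is where an error would produce $A_1^{(f)}\ne A_1^{(m)}$ or a matrix that is not skew-symmetric. A secondary point is that the parameter $\theta$ plays no essential role: it only scales $u$ and so generates the one-parameter family.
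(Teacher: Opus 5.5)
Your proposal is correct and follows the paper's overall strategy. You choose the coefficients symmetrically in the order of the parents and identically for both sexes, $p_{ij,i}=p_{ji,i}=\frac12(1+a_{ij})$ and $p_{ij,j}=p_{ji,j}=\frac12(1-a_{ij})$, so that $A_1^{(f)}=A_1^{(m)}=A_1$, and then you invoke Lemma~6.

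The difference is the choice of $A_1$. The paper uses the circulant $a_{ij}=\theta\sin\frac{2\pi(j-i)}{N_0}$ on all of $\{1,\dots,N_0\}$. Its check that $(1,\dots,1)^T\in\ker A_1$ rests on $\sum_{j=1}^{N_0}\sin\frac{2\pi(j-i)}{N_0}=0$, i.e.\ on summing over the full index set. You instead build $A_1=uv^T-vu^T$ directly on $I_1$, with $u,v\perp(1,\dots,1)$ supported on its first three elements and $a_{ij}=0$ for every other pair.

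Your version therefore works verbatim for every $I_1$ with $|I_1|\ge3$. The paper's argument, as written, only covers $I_1=\{1,\dots,N_0\}$. For example, with $N_0=4$ and $I_1=\{1,2,3\}$ the restricted matrix is
\[
A_1=\theta\begin{pmatrix}0&1&0\\-1&0&1\\0&-1&0\end{pmatrix},
\]
whose kernel is spanned by $(1,0,1)$. Indeed $\dot s_1=\theta s_1s_2>0$ on $\Om_1$, so no interior trajectory is periodic. The fix would be to apply the sine formula to the ranks of the elements of $I_1$, with $N_0$ replaced by $|I_1|$.

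What the paper's family offers in exchange is a genuinely collective oscillation in which all types interact. In yours only three types rotate, and for $|I_1|>3$ the remaining coordinates of $s$ stay constant along trajectories. That is enough for the statement, but it is a degenerate example. You also make explicit a step the paper leaves implicit: interior non-equilibrium points of \eqref{eq:7} are exactly those with $A_1s^0\ne0$.

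In the write-up, fix three things:
\begin{itemize}
\item Drop the abandoned first choice $u=\theta(1,\dots,1)$.
\item Replace the displayed computation of $a^{(f)}_{ij}$, which as written evaluates to $0$, by $\frac{1+a_{ij}}2+\frac{1+a_{ij}}2-1=a_{ij}$, using $p_{ji,i}=\frac{1-a_{ji}}2$.
\item Set $p^{(f)}_{ii,i}=p^{(m)}_{ii,i}=1$ explicitly, since ``all other coefficients are set to $0$'' would violate \eqref{eq:1}.
\end{itemize}
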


\begin{proof}
For $\bar q\in\{f,m\}$ and $1\le i<j\le N_0$ put
\[
p_{ij,i}^{(\bar q)}=p_{ji,i}^{(\bar q)}
=\frac12\left(1+\theta\sin\frac{2\pi(j-i)}{N_0}\right),
\]
\[
p_{ij,j}^{(\bar q)}=p_{ji,j}^{(\bar q)}
=\frac12\left(1-\theta\sin\frac{2\pi(j-i)}{N_0}\right).
\]
Put the diagonal coefficients equal to one and all others equal to zero. A direct check shows that these coefficients satisfy the stochastic and Volterra conditions, and $A_1^{(f)}=A_1^{(m)}=A_1$. The matrix $A_1$ is skew-symmetric, has rank $2$, and
\[
(1,1,\ldots,1)^T\in\ker A_1.
\]
Hence all the conditions of Lemma~6 are satisfied, and the assertion follows from it.
\end{proof}

\begin{remark}
Systems of the form \eqref{eq:7} with a skew-symmetric matrix $A_1$ have previously been studied, in particular in \cite{ref41}, where, under corresponding conditions on $\ker A_1$, periodicity of interior non-equilibrium trajectories was established. The corollary above provides an explicit realization of such a mechanism within the continuous analogue of the VQSOBP for every $N_0$.
\end{remark}

\begin{example}
Let $n=\nu=3$. For each $\bar q\in\{f,m\}$ and each cyclic pair $(i,j)=(1,2),(2,3),(3,1)$ put
\[
p_{ij,i}^{(\bar q)}=p_{ji,i}^{(\bar q)}=\frac34,\qquad
p_{ij,j}^{(\bar q)}=p_{ji,j}^{(\bar q)}=\frac14.
\]
For pairs of parents with identical indices, the heredity coefficient of the matching type is set to one, and all remaining coefficients not specified above are set to zero. Then
\[
A_1=\frac12
\begin{pmatrix}
0&1&-1\\
-1&0&1\\
1&-1&0
\end{pmatrix},
\qquad \rank A_1=2,\qquad
\ker A_1=\spann\{(1,1,1)\}.
\]
For
\[
x(0)=y(0)=s^0=\left(\frac12,\frac13,\frac16\right)
\]
we have $A_1s^0\ne0$, and on $\Om_1$ system \eqref{eq:5} takes the form
\[
\begin{cases}
\dot s_1=s_1(s_2-s_3)/2,\\
\dot s_2=s_2(s_3-s_1)/2,\\
\dot s_3=s_3(s_1-s_2)/2.
\end{cases}
\]
By Lemma~6 the corresponding trajectory is periodic. A numerical computation of its period gives $T\approx23.17873$. Figure~\ref{fig:periodic} shows projections of this trajectory onto the planes $(s_1,s_2)$ and $(s_2,s_3)$.

\begin{figure}[htbp]
\centering
\includegraphics[width=0.94\textwidth]{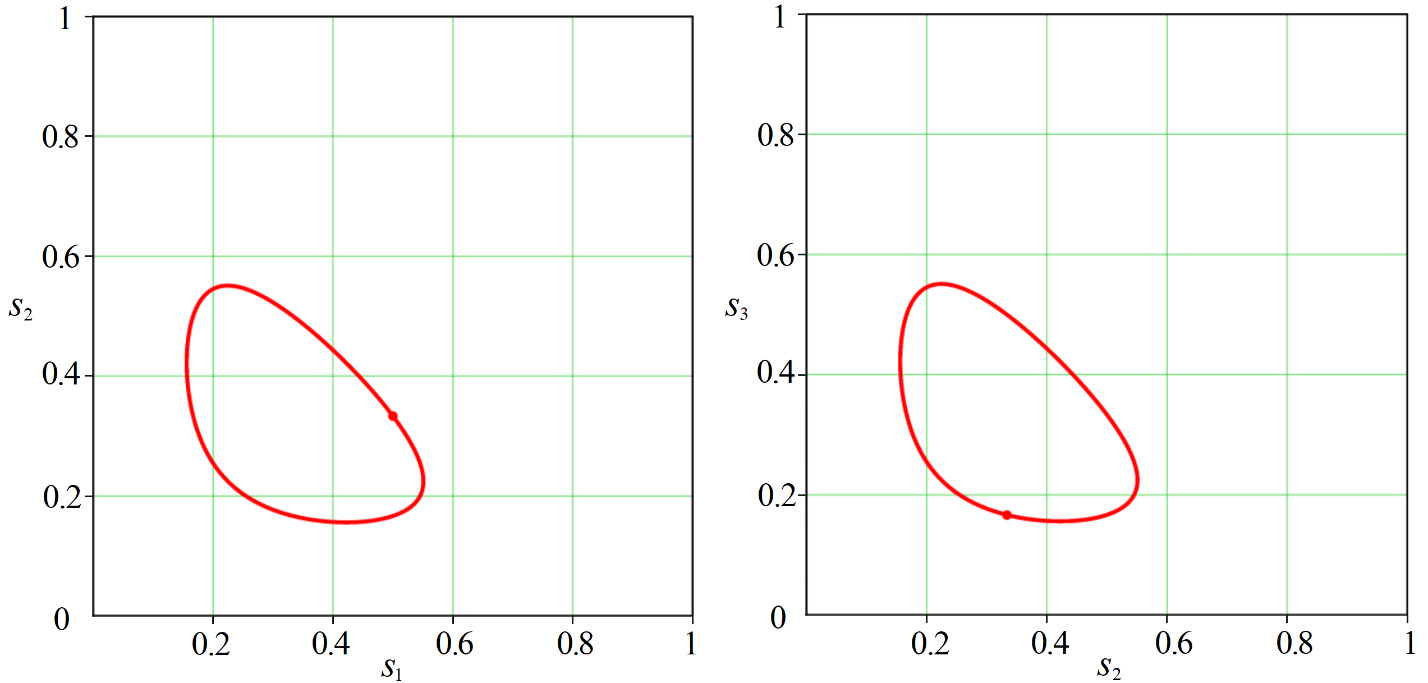}
\caption{Projections of the periodic trajectory onto the planes $(s_1,s_2)$ and $(s_2,s_3)$.}
\label{fig:periodic}
\end{figure}
\end{example}

\subsection{Comparison of discrete and continuous dynamics}

Let us compare the properties of system \eqref{eq:5} established above with the dynamics of the operator $W$.

By $\omega_{\mathrm{disk}}(z^0)$ and $\omega_{\mathrm{cont}}(z^0)$ we denote the $\omega$-limit sets of the corresponding trajectories, considered in $\Om$, and by $\cA_{\mathrm{disk}}(E_q)$ and $\cA_{\mathrm{cont}}(E_q)$ the sets of initial states whose trajectories converge to $E_q$. Thus,
\[
\omega_{\mathrm{cont}}(z^0)=\omega(z^0),\qquad
\cA_{\mathrm{cont}}(E_q)=\cA(E_q).
\]
Hyperbolicity is understood with respect to the reduced Jacobian matrices obtained after eliminating the dependent coordinates $x_q,y_q$.

\begin{lemma}
For a fixed $q\in\{1,\ldots,n\}$ the following statements hold:
\begin{enumerate}[label=$\arabic*^\circ$.,leftmargin=2.4em]
\item The point $E_q$ is simultaneously hyperbolic for $W$ and for system \eqref{eq:5} if and only if $\Delta_{qi}\ne0$ for all $i\ne q$, and, when $n<\nu$, $\varepsilon_q>0$.
\item The point $E_q$ is locally exponentially asymptotically stable relative to $\Om$ both for $W$, and for system \eqref{eq:5} if and only if $\Delta_{qi}>0$ for all $i\ne q$, and, when $n<\nu$, $\varepsilon_q>0$.

If $\Delta_{qi}<0$ for at least one $i\ne q$, then $E_q$ is unstable for $W$ and for system \eqref{eq:5}, regardless of its hyperbolicity.
\item If $\Delta_{qi}=0$ for at least one $i\ne q$ or if $n<\nu$, $\varepsilon_q=0$, and then $E_q$ s non-hyperbolic for $W$ and for system \eqref{eq:5}, non-isolated, and is not asymptotically stable.
\end{enumerate}
\end{lemma}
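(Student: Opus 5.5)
The plan is to reduce everything to the linearization at $E_q$ and exploit the relation $DW(E_q)=I+DF(E_q)$, where $F(z)=W(z)-z$ is the right-hand side of \eqref{eq:5}. After eliminating the dependent coordinates $x_q,y_q$ (in the same way as in the proof of Lemma~5), the reduced Jacobian of $W$ at $E_q$ equals $I+J^{(5)}$, where $J^{(5)}$ is the reduced Jacobian of \eqref{eq:5}. Hence $J^{(5)}$ is block diagonal, with the $2\times2$ blocks $J_{qi}^{(5)}$ and, for $n<\nu$, the scalar blocks $-p_{ql,q}^{(m)}$. The reduced Jacobian of $W$ then has the blocks $I_2+J_{qi}^{(5)}$ and $1-p_{ql,q}^{(m)}$. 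I would first write out the block spectra. The trace of $J_{qi}^{(5)}$ is $-(A_{qi}+D_{qi})$, and its determinant is $A_{qi}D_{qi}-B_{qi}C_{qi}=\Delta_{qi}$. Since $B_{qi},C_{qi}\ge0$, the block is a Metzler matrix, so its eigenvalues are real: the discriminant $(A_{qi}-D_{qi})^2+4B_{qi}C_{qi}$ is nonnegative. The eigenvalues of the discrete block are $1+\lambda$.

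For $1^\circ$: continuous hyperbolicity means no eigenvalue $\lambda=0$, which is equivalent to $\Delta_{qi}\ne0$ and $p_{ql,q}^{(m)}\ne0$, i.e. $\varepsilon_q>0$. Discrete hyperbolicity means $|1+\lambda|\ne1$. Because the eigenvalues are real, this excludes $\lambda=0$ and $\lambda=-2$. I must check that $\lambda=-2$ never happens unless the condition already fails. Since $0\le A_{qi},D_{qi}\le1$ and $B_{qi}C_{qi}\ge0$, the Perron-type bound gives $\lambda\ge -\max(A_{qi},D_{qi})-\sqrt{B_{qi}C_{qi}}\ge -2$. Equality forces $A=D=B=C=1$, and then $\Delta_{qi}=0$, which is excluded anyway. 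For the scalar blocks, $1-p\in[0,1]$, so $-1$ is never attained. This gives the equivalence.

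For $2^\circ$: continuous local exponential stability is exactly Lemma~5. For $W$, I will show that the spectral radius is less than $1$ under the same conditions. If $\Delta_{qi}>0$, then both eigenvalues are negative: their product is positive and their sum is $-(A+D)\le0$. Combined with $\lambda\ge-2$ and $\lambda\ne-2$, this gives $1+\lambda\in(-1,1)$. The scalar blocks give $1-p_{ql,q}^{(m)}\in[0,1)$ when $\varepsilon_q>0$. Linearized stability for maps then yields local exponential stability relative to $\Om$. For the converse and the instability statement: if $\Delta_{qi}<0$, there is an eigenvalue $\lambda>0$, so $1+\lambda>1$. The Perron eigenvector is nonnegative, and the perturbation stays in the invariant face $\Om_{qi}$ (for $W$, the Volterra condition keeps the face invariant). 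Instability for both systems then follows from the unstable-manifold or Chetaev argument, as in Lemma~5. The remaining degenerate cases make stability fail by $3^\circ$.

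For $3^\circ$: non-hyperbolicity follows from $\lambda=0$, i.e. a discrete eigenvalue $1$. Non-isolatedness uses the fact that the fixed points of $W$ coincide with the equilibria of \eqref{eq:5}, since $W(z)=z$ if and only if $F(z)=0$. Therefore the families of equilibria accumulating at $E_q$ constructed in Lemma~5 are also fixed-point families for $W$. Non-isolated equilibria cannot be asymptotically stable. The main obstacle I expect is the lower bound $\lambda\ge-2$ with equality only in the degenerate case, together with a careful justification that the discrete reduced Jacobian really is $I+J^{(5)}$ with the same block structure. I would verify the latter directly by differentiating \eqref{eq:4} at $E_q$.
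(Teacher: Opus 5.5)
Your proposal is correct and follows essentially the same route as the paper. Both arguments use the block-diagonal reduced Jacobian, the relation $\mu=1+\lambda$, and the bound $\lambda\ge-2$, with equality only when $A_{qi}=B_{qi}=C_{qi}=D_{qi}=1$, which forces $\Delta_{qi}=0$. Both then use first-approximation stability, a positive eigenvector on the invariant face $\Om_{qi}$ for instability, and Lemma~5 for the degenerate cases. Your explicit discriminant and Perron-type bound fill in steps the paper leaves implicit. So does your remark that $W(z)=z$ if and only if $F(z)=0$, which is what transfers non-isolatedness from system \eqref{eq:5} to $W$.
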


\begin{proof}
For fixed $i\ne q$ when $\Delta_{qi}>0$ the characteristic equation of the block $J_{qi}^{(5)}$ has two real negative roots $\lambda$, and the corresponding eigenvalues of the block
\[
J_{qi}^{(4)}=I_2+J_{qi}^{(5)},
\]
where $I_2$ is the identity matrix of order two, equal $\mu=1+\lambda$.

All roots satisfy $\lambda\ge-2$, with equality possible only when $A_{qi}=B_{qi}=C_{qi}=D_{qi}=1$, i.e., when $\Delta_{qi}=0$. Hence, when $\Delta_{qi}>0$, both roots satisfy
\[
-2<\lambda_{1,2}<0,\qquad -1<\mu_{1,2}=1+\lambda_{1,2}<1.
\]
If $\Delta_{qi}<0$, one of the roots $\lambda$ is positive, so $\mu>1$. If $\Delta_{qi}=0$ then
\[
\lambda_1=0,\qquad \mu_1=1,
\]
and the second root equals
\[
\lambda_2=-(A_{qi}+D_{qi}),\qquad \mu_2=1-A_{qi}-D_{qi}.
\]
For the additional coordinates $y_l$, $l>n$, the eigenvalues of the one-dimensional blocks of the reduced Jacobian matrices of system \eqref{eq:5} and of $W$ equal, respectively,
\[
-p_{ql,q}^{(m)},\qquad 1-p_{ql,q}^{(m)}.
\]
This proves the hyperbolicity criterion. When $\Delta_{qi}>0$ for all $i\ne q$ and, if $n<\nu$, $\varepsilon_q>0$, then, by the theorem on stability by first approximation, the point $E_q$ is locally exponentially asymptotically stable both for system \eqref{eq:5} and for $W$.

If $\Delta_{qi}<0$, the eigenvector of the block $J_{qi}^{(5)}$ corresponding to the positive eigenvalue $\lambda$, can be chosen with positive coordinates (see Lemma~5). The corresponding direction is admissible on the invariant two-type face $\Om_{qi}$, and for the operator $W$ this same eigenvector corresponds to an eigenvalue $\mu>1$. Hence $E_q$ is unstable relative to $\Om$ both for $W$, and for system \eqref{eq:5}.

If $\Delta_{qi}=0$ for at least one $i\ne q$ or, when $n<\nu$, $\varepsilon_q=0$, then by Lemma~5 the point $E_q$ is non-isolated and is not asymptotically stable. This proves $3^\circ$. Taking into account the case $\Delta_{qi}<0$ we obtain the necessity of the conditions $\Delta_{qi}>0$ for all $i\ne q$ and, when $n<\nu$, $\varepsilon_q>0$ in $2^\circ$.
\end{proof}

\begin{remark}
In the non-hyperbolic case, when $\Delta_{qi}\ge0$ for all $i\ne q$ Lemma 7 Lemma~7 does not allow one to judge the Lyapunov stability of $E_q$ or its preservation under the transition from $W$ to system \eqref{eq:5}.
\end{remark}

Let us turn to the conditions under which the $\omega$-limit sets of the two trajectories coincide.

\begin{lemma}
\begin{enumerate}[label=$\arabic*^\circ$.,leftmargin=2.4em]
\item Let, for a fixed $q\in\{1,\ldots,n\}$,
\[
\left(1-p_{qi,q}^{(f)}\right)\left(1-p_{jq,q}^{(m)}\right)
<p_{jq,q}^{(f)}p_{qi,q}^{(m)},
\qquad i,j\in\{1,\ldots,n\}\setminus\{q\},
\]
where $i=j$ is allowed, and when $n<\nu$, $\varepsilon_q>0$. Then
\[
\cA_{\mathrm{disk}}(E_q)=\cA_{\mathrm{cont}}(E_q)=\Om\setminus\Upsilon_q,
\]
with
\[
\omega_{\mathrm{disk}}(z^0)=\omega_{\mathrm{cont}}(z^0)=\{E_q\}
\qquad\text{for all }z^0\in\Om\setminus\Upsilon_q.
\]
\item For any $q,i\in\{1,\ldots,n\}$, $q\ne i$, the equality
\[
\omega_{\mathrm{disk}}(z^0)=\omega_{\mathrm{cont}}(z^0)
\]
holds for all $z^0\in\Om_{qi}$ if and only if
\[
\left(p_{iq,q}^{(f)},p_{qi,q}^{(f)},p_{iq,q}^{(m)},p_{qi,q}^{(m)}\right)\ne(1,0,0,1).
\]
When this condition holds, both trajectories with a common initial state converge to the same equilibrium. If $\Delta_{qi}>0$, the common limit equals $E_q$ for all $z^0\in\Om_{qi}\setminus\{E_i\}$; if $\Delta_{qi}<0$, it equals $E_i$ for all $z^0\in\Om_{qi}\setminus\{E_q\}$. For the excluded coefficient set, the equality $\omega_{\mathrm{disk}}(z^0)=\omega_{\mathrm{cont}}(z^0)$ holds precisely when $x_i^0=y_i^0$.
\end{enumerate}
\end{lemma}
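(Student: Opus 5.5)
For part $1^\circ$ I will reduce the continuous statement to Lemma~4 and prove a discrete analogue of the same Lyapunov inequality. The hypothesis, taken over all pairs $i,j\ne q$ (including $i=j$), gives
\[
\max_{i\ne q}\bigl(1-p_{qi,q}^{(f)}\bigr)\cdot\max_{j\ne q}\bigl(1-p_{jq,q}^{(m)}\bigr)<\min_{j\ne q}p_{jq,q}^{(f)}\cdot\min_{i\ne q}p_{qi,q}^{(m)},
\]
that is, $a_qc_q<b_qd_q$, because the extremes may be attained at different indices. Lemma~4 then yields $\cA_{\mathrm{cont}}(E_q)=\Om\setminus\Upsilon_q$ and $\omega_{\mathrm{cont}}(z^0)=\{E_q\}$.

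For $W$ I will use the same variables $u,v,w$ and the same function $\Phi_q=u+\varrho_q(1-y_q)$. Writing $x'_q,y'_q$ from \eqref{eq:4}, the increments $u'-u$, $v'-v$, $w'-w$ are exactly the right-hand sides of \eqref{eq:5} evaluated at $(x,y)$, since $W(z)-z$ is the vector field. Hence the same inequalities give
\[
\Phi_q(W z)-\Phi_q(z)\le-\gamma_q y_q u-\gamma_q x_q v-\varrho_q\varepsilon_q x_q w\le 0.
\]
It remains to check three things: $\Upsilon_q$ is invariant under $W$ (if $x_q=y_q=0$, then $x'_q=y'_q=0$ by the Volterra condition); points outside $\Upsilon_q$ are not mapped into $\Upsilon_q$; and the decrease is strict off $\{E_q\}\cup\Upsilon_q$. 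The discrete LaSalle principle then gives $W^k z^0\to E_q$.

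The second point is the delicate one. From $x'_q=x_q(1-\sum_{k\ne q}(1-p_{qk,q}^{(f)})y_k)+y_q\sum_i p_{iq,q}^{(f)}x_i$, one finds that $x_q>0$ forces $x'_q>0$ unless $p_{qk,q}^{(f)}=0$ on the support of $y$ and $y_q=0$, and similarly for $y'_q$. I will show that $b_q,d_q>0$ (which follows from $b_qd_q>a_qc_q\ge0$) together with $\varepsilon_q>0$ prevents both from vanishing. If this fails, I will instead argue via $\omega$-limit points, using that $\Phi_q$ is constant and equal to $1+\varrho_q$ on $\Upsilon_q$ but strictly smaller after one step.

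For part $2^\circ$ I will restrict to the invariant face $\Om_{qi}$. It is two-dimensional with coordinates $(x_i,y_i)$. The continuous system is a planar cooperative/competitive quadratic system with equilibria $E_q$, $E_i$, and the discrete map is a quadratic map of $[0,1]^2$. Using Lemma~7 and the sign of $\Delta_{qi}$ (note $\Delta_{iq}$ has the opposite sign structure), I will show both flows are monotone on the face and converge to the stable vertex: $E_q$ if $\Delta_{qi}>0$, and $E_i$ if $\Delta_{qi}<0$. For the degenerate exceptional tuple $(1,0,0,1)$, the continuous system becomes $\dot x_i=y_i-x_i$, $\dot y_i=x_i-y_i$, which converges to the diagonal point $(x_i^0+y_i^0)/2$. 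The map instead swaps $x_i\leftrightarrow y_i$, giving a 2-cycle, so the $\omega$-limit sets agree exactly when $x_i^0=y_i^0$. For the converse (no other tuple fails), the main obstacle is the case $\Delta_{qi}=0$, where there is a line of equilibria. There I will check that the linear relation makes $W$ a convex combination contraction toward the same equilibrium on each invariant line $\{\,c_1x_i+c_2y_i=\const\,\}$. Periodicity can arise only from eigenvalue $\mu_2=-1$, i.e.\ $A_{qi}+D_{qi}=2$, which excludes precisely the given tuple.
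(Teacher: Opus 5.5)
Your reduction of $1^\circ$ to Lemma~4 relies on a false implication: the hypothesis does not give $a_qc_q<b_qd_q$. In $(1-p_{qi,q}^{(f)})(1-p_{jq,q}^{(m)})<p_{jq,q}^{(f)}p_{qi,q}^{(m)}$ the index $i$ appears on both sides, and so does $j$. So the hypothesis forces $p_{qi,q}^{(m)}>0$ and $p_{jq,q}^{(f)}>0$, and it is equivalent to the ratio condition
\[
\max_{i\ne q}\frac{1-p_{qi,q}^{(f)}}{p_{qi,q}^{(m)}}<\min_{j\ne q}\frac{p_{jq,q}^{(f)}}{1-p_{jq,q}^{(m)}}
\]
(a ratio on the right is read as $+\infty$ when its denominator vanishes). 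This is strictly weaker than the max--min condition of Lemma~4, precisely because the extremes may sit at different indices.

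For a concrete case, take $n=\nu=3$, $q=3$, $p_{31,3}^{(f)}=\tfrac12$, $p_{31,3}^{(m)}=1$, $p_{32,3}^{(f)}=\tfrac9{10}$, $p_{32,3}^{(m)}=\tfrac15$, $p_{13,3}^{(f)}=p_{23,3}^{(f)}=1$, $p_{13,3}^{(m)}=p_{23,3}^{(m)}=\tfrac12$. The hypothesis of $1^\circ$ holds for all four pairs $(i,j)$, but $a_3c_3=\tfrac14>\tfrac15=b_3d_3$. Worse, $\varrho_3=\tfrac{15}{7}$, and at $x=(1,0,0)$, $y=(0,0,1)$ one finds $\dot\Phi_3=\tfrac1{14}>0$. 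So neither the continuous conclusion nor your discrete inequality obtained ``by the same inequalities'' is established as written.

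The repair, which is what the paper does, is to take $\rho$ strictly between the two sides of the display above. Then compute directly, for $L_q=(1-x_q)+\rho(1-y_q)$ and using $p_{qk,q}^{(f)}=1$ for $k>n$:
\[
\begin{aligned}
\dot L_q={}&x_q\sum_{\substack{k\ne q\\ k\le n}}\bigl[1-p_{qk,q}^{(f)}-\rho\,p_{qk,q}^{(m)}\bigr]y_k
+y_q\sum_{i\ne q}\bigl[\rho\,(1-p_{iq,q}^{(m)})-p_{iq,q}^{(f)}\bigr]x_i\\
&-\rho\,x_q\sum_{l>n}p_{ql,q}^{(m)}y_l .
\end{aligned}
\]
All brackets are negative, so $\dot L_q\le0$. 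Using $\varepsilon_q>0$ for the last sum, equality holds only on $\{E_q\}\cup\Upsilon_q$. From here your argument coincides with the paper's. Since $L_q$ is affine and $W(z)-z=F(z)$, you get $L_q(Wz)-L_q(z)=\dot L_q(z)$, and the discrete and continuous LaSalle arguments run in parallel. Your ``delicate'' second point needs no coefficient analysis: $L_q\le 1+\rho$ on $\Om$, with equality exactly on $\Upsilon_q$. Monotonicity of $L_q$ therefore keeps orbits and their limit points off $\Upsilon_q$, so your fallback is the right argument.

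In $2^\circ$, for $\Delta_{qi}\ne0$, neither Lemma~7 (which is only local) nor monotonicity gives global convergence of the discrete orbits. Order preservation of the planar map does not exclude $2$-cycles: the swap map of the excluded tuple is itself order-preserving. Also, discrete orbits on the face need not be monotone in time. The paper instead applies $1^\circ$ to the invariant face $\Om_{qi}$. Only the index $i$ occurs there, so the hypothesis reduces to $\Delta_{qi}>0$, no condition on $\varepsilon_q$ arises, and $\Upsilon_q\cap\Om_{qi}=\{E_i\}$. For $\Delta_{qi}<0$ one uses $\Delta_{iq}=-\Delta_{qi}$.

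For $\Delta_{qi}=0$, your contraction idea matches the paper's use of the common linear first integrals $H_2=C_{qi}x_i+A_{qi}y_i$ and $H_1=D_{qi}x_i+B_{qi}y_i$ (notation of Lemma~5). Your ``$\mu_2=-1$'' heuristic becomes rigorous as follows. When $A_{qi}>0$, the map restricted to a level segment of $H_2$, written in $x_i$, has derivative $1-[A_{qi}(1-y_i)+B_{qi}y_i]-[D_{qi}(1-x_i)+C_{qi}x_i]$. This lies in $(-1,1)$ at every interior point unless $A_{qi}=B_{qi}=C_{qi}=D_{qi}=1$. The subcase $A_{qi}=0$ still has to be handled separately: either one coordinate is constant and the map is affine with coefficient in $[0,1]$, or $C_{qi}=0$ with $B_{qi},D_{qi}>0$ and one uses $H_1$ with monotone coordinates.
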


\begin{proof}
By virtue of the conditions o $1^\circ$ for some $\rho>0$ the function
\[
L_q=(1-x_q)+\rho(1-y_q)
\]
does not increase for $W$ and system \eqref{eq:5}, and strictly decreases outside $\{E_q\}\cup\Upsilon_q$. Since its maximum is attained exactly on $\Upsilon_q$, for $z^0\notin\Upsilon_q$ the monotonicity excludes limit points on this face. By the invariance principle for $W$ and for system \eqref{eq:5}, both $\omega$-limit sets equal $\{E_q\}$. Since $\Upsilon_q$ is closed, invariant, and does not contain $E_q$, we obtain the stated equality of the basins of attraction.

On the two-type face $\Om_{qi}$, the condition of $1^\circ$ is equivalent to $\Delta_{qi}>0$, and $\Upsilon_q\cap\Om_{qi}=\{E_i\}$. Hence, when $\Delta_{qi}>0$ both trajectories converge to $E_q$ for all initial states except $E_i$. If $\Delta_{qi}<0$, then $\Delta_{iq}=-\Delta_{qi}>0$, and both trajectories converge to $E_i$ for all initial states except $E_q$.

Let $\Delta_{qi}=0$ and
\[
\left(p_{iq,q}^{(f)},p_{qi,q}^{(f)},p_{iq,q}^{(m)},p_{qi,q}^{(m)}\right)\ne(1,0,0,1).
\]
In cases where one of the coordinates is constant, the equation for the other coordinate becomes linear, and the corresponding discrete map becomes affine; the coefficient of the varying coordinate in the discrete map lies in the interval $[0,1]$; if it equals unity, both trajectories are stationary. Therefore, the continuous and discrete trajectories share the same limit.

Let
\[
p_{iq,q}^{(f)}=0,\qquad p_{iq,q}^{(m)}=1,\qquad
p_{qi,q}^{(f)}<1,\qquad p_{qi,q}^{(m)}>0,
\]
then the function
\[
H_1=p_{qi,q}^{(m)}x_i+\left(1-p_{qi,q}^{(f)}\right)y_i
\]
is a common first integral for $W$ and for system \eqref{eq:5}. The coordinate $x_i$ is non-decreasing, $y_i$ is non-increasing, and on each level segment of $H_1$ the equilibrium is unique. Hence both trajectories converge to the same equilibrium.

In the remaining cases $A_{qi},C_{qi}>0$, and the common first integral
\[
H_2=\left(1-p_{iq,q}^{(m)}\right)x_i+p_{iq,q}^{(f)}y_i
\]
defines invariant segments, on each of which the equilibrium is unique. After eliminating one coordinate using the condition $H_2=\const$ the discrete dynamics reduces to a one-dimensional map $T$, for which $|T'(u)|<1$ at interior points of the segment. Hence the distance to the equilibrium strictly decreases, and, by compactness of the segment, the discrete trajectory converges to it. The continuous trajectory on the same segment tends monotonically to this equilibrium. Single-point levels consist of equilibria. Consequently, the continuous and discrete trajectories converge to the same equilibrium.

For the excluded coefficient set, the discrete trajectory on $\Om_{qi}$ forms a two-cycle when $x_i^0\ne y_i^0$, while the continuous trajectory converges to the equilibrium
\[
x_i=y_i=\frac{x_i^0+y_i^0}{2}.
\]
Consequently, in this case $\omega_{\mathrm{disk}}(z^0)=\omega_{\mathrm{cont}}(z^0)$ if and only if $x_i^0=y_i^0$.
\end{proof}

Periodic regimes require a separate comparison.

\begin{lemma}
\begin{enumerate}[label=$\arabic*^\circ$.,leftmargin=2.4em]
\item If $\Gamma$ is a non-constant periodic orbit of system \eqref{eq:5}, then $W(\Gamma)\nsubseteq\Gamma$.
\item On $\Om_{qi}$ there are no non-constant periodic orbits of system \eqref{eq:5}. Non-constant cycles of $W$ exist precisely for the coefficient set excluded in $2^\circ$ of Lemma~8. All of them have minimal period $2$.
\item Let $A_1^{(f)}=A_1^{(m)}=:A_1$. Then on $\Om_1$ the operator $W$ has no non-constant cycles. For the existence of an interior non-constant periodic trajectory of system \eqref{eq:5} on $\Om_1$ it is necessary that
\[
\ker A_1\cap(0,\infty)^{|I_1|}\ne\varnothing.
\]
If $\rank A_1=2$, this condition is also sufficient, and all interior non-equilibrium trajectories of system \eqref{eq:5} on $\Om_1$ are periodic.
\end{enumerate}
\end{lemma}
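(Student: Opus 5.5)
Part $1^\circ$ comes from the relation between the continuous flow and the discrete map. Since $\dot z=W(z)-z$, if $\Gamma$ is a non-constant periodic orbit and $W(\Gamma)\subseteq\Gamma$, I would exploit the geometry of the vector field. At every point $z\in\Gamma$ the velocity is $W(z)-z$, so $W(z)=z+\dot z$ lies on the tangent line to $\Gamma$ at $z$. If $W(z)\in\Gamma$ as well, the chord from $z$ to $W(z)$ must be tangent to $\Gamma$ at $z$ for every $z$. Take a point $z^*\in\Gamma$ maximizing a generic linear functional $\ell$ over $\Gamma$. There $\ell(\dot z)=0$. A nonzero velocity with $\ell(W(z^*))=\ell(z^*)$ is compatible with $W(z^*)\in\Gamma$ only if $W(z^*)$ lies in the supporting line. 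I would choose $\ell$ so that the supporting hyperplane meets $\Gamma$ only at $z^*$; this is possible for generic $\ell$ at an exposed point of the convex hull. Then $W(z^*)=z^*$, so $\dot z(z^*)=0$, which contradicts non-constancy. This exposed-point argument is the step I would write carefully: a compact curve's convex hull has exposed points lying on $\Gamma$, and a generic linear functional attains its maximum uniquely there.

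For $2^\circ$ I would work on the invariant face $\Om_{qi}$, which is two-dimensional with coordinates $(x_i,y_i)$. The proof of Lemma~8 already shows that, in every case except the excluded set, there is a common first integral ($H_1$ or $H_2$) or one coordinate is constant. Consequently the continuous dynamics lives on one-dimensional invariant segments with monotone motion, so no non-constant periodic orbit exists. In the excluded case the flow is $\dot x_i=y_i-x_i$, $\dot y_i=x_i-y_i$, which is gradient-like toward the diagonal, so again there is no periodic orbit. For $W$, the Lemma~8 argument gives convergence for non-excluded coefficients (a contraction $|T'|<1$ on segments, or an affine map with coefficient in $[0,1]$), so there are no non-constant cycles. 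In the excluded set $W$ swaps $x_i$ and $y_i$, giving 2-cycles off the diagonal, and minimal period $2$ is immediate.

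For $3^\circ$, on $\Om_1$ I would write $W$ explicitly. The discrete map is $s_i'=s_i(1+(A_1s)_i)$, which is a Volterra QSO with a skew-symmetric matrix. I would use the known fact that a non-degenerate Volterra operator has no non-constant cycles, via a Lyapunov function of the form $\prod s_i^{C_i}$ or the standard theorem; I would cite the relevant result from \cite{ref10} or Ganikhodzhaev's theory. For necessity in the continuous case, suppose $\ker A_1$ has no positive vector. By Gordan/Farkas duality (skew-symmetry turns $A_1^T$ into $-A_1$), there is $c$ with $A_1^Tc\ge 0$ nonzero on the relevant sign pattern. Then $V=\sum c_i\ln s_i$ satisfies $\dot V=c^TA_1s$, which has a fixed sign and is nonzero on the interior, so $V$ is strictly monotone along interior trajectories and excludes periodicity. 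Getting the exact alternative (strict positivity somewhere versus a positive kernel vector) right is the main technical obstacle. Sufficiency when $\rank A_1=2$ is exactly Lemma~6.
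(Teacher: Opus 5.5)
Your proposal is correct. For $2^\circ$ and for the rank-two sufficiency in $3^\circ$ it agrees with the paper, since both are read off from Lemma~8 and Lemma~6. Part $1^\circ$ and the two necessity arguments in $3^\circ$ take different routes.

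In $1^\circ$ a linear functional only gives $\ell(W(z^*))=\ell(z^*)$, so you need a unique maximizer on $\Gamma$. Such an $\ell$ does exist: the support function $\ell\mapsto\max_{\Gamma}\ell$ is convex, hence differentiable for almost every $\ell$, and at those $\ell$ the maximizer is unique. The paper avoids this step with the strictly convex test function $\|z\|^2$. At a maximizer $z^*$ on $\Gamma$ one has $\langle z^*,F(z^*)\rangle=0$, so $\|W(z^*)\|^2=\|z^*\|^2+\|F(z^*)\|^2>\max_{\Gamma}\|z\|^2$, and $W(z^*)\notin\Gamma$ immediately.

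For the continuous necessity in $3^\circ$, the alternative you are looking for is Stiemke's lemma: either $A_1x=0$ for some $x>0$, or $A_1^Tc\ge0$, $A_1^Tc\ne0$ for some $c$. In the second case $V=\sum_ic_i\ln s_i$ satisfies $\dot V=(A_1^Tc)^Ts>0$ in the interior. This excludes interior periodic orbits, and in fact interior $\omega$-limit points altogether, which is slightly more than the paper proves. The paper uses no duality: averaging $\frac{d}{dt}\ln s_i=(A_1s)_i$ over a period shows directly that the time average $\bar s>0$ lies in $\ker A_1$.

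For the discrete part of $3^\circ$, appealing to the known result that Volterra quadratic stochastic operators have no periodic points other than fixed points is legitimate. Indeed, $W$ restricted to $\Om_1$ is $s_i'=s_i(1+(A_1s)_i)$ with $A_1$ skew-symmetric and entries in $[-1,1]$, and no non-degeneracy assumption is needed. However, the product $\prod_is_i^{C_i}$ settles the matter by itself only when some $C>0$ lies in $\ker A_1$. In that case $\sum_iC_i\ln(1+(A_1s)_i)\le C^TA_1s=0$, with equality only at fixed points.

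The paper's short argument covers every case and is the discrete twin of the averaging above:
\begin{enumerate}
\item take logarithms along the cycle and sum;
\item use $\ln(1+\xi)\le\xi$ to get $A_1\bar s\ge0$ for the cycle average $\bar s>0$;
\item skew-symmetry gives $\bar s^TA_1\bar s=0$, which forces $A_1\bar s=0$;
\item hence equality holds in every logarithmic inequality, so each point of the cycle is fixed.
\end{enumerate}

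Finally, in $2^\circ$ you misattribute the mechanism. The first integrals $H_1,H_2$ and the constant-coordinate cases arise in Lemma~8 only when $\Delta_{qi}=0$. For $\Delta_{qi}\ne0$, convergence on $\Om_{qi}$ comes from the Lyapunov function $L_q$ (or $L_i$) of Lemma~8. You should therefore invoke the convergence conclusion of Lemma~8 directly, as the paper does.
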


\begin{proof}
At the point $z^*\in\Gamma$, where the Euclidean norm attains its maximum, we have
\[
\langle z^*,F(z^*)\rangle=0,\qquad F(z^*)\ne0,
\]
where $F$ is the right-hand side of system \eqref{eq:5}. Hence
\[
\|W(z^*)\|^2=\|z^*\|^2+\|F(z^*)\|^2>\|z^*\|^2,
\]
which proves $1^\circ$.

Item $2^\circ$ follows directly from item $2^\circ$ of Lemma~8.

Invariance of $\Om_1$ for $W$ and for system \eqref{eq:5} is equivalent to the equality $A_1^{(f)}=A_1^{(m)}=A_1$.

Suppose a discrete cycle exists. Taking the logarithm of the coordinate equalities and summing them over the period, using the inequality $\ln(1+\xi)\le\xi$ we obtain $A_1\bar s\ge0$, where $\bar s>0$ is the average over the cycle. Since $A_1$ is skew-symmetric, $\bar s^TA_1\bar s=0$, so $A_1\bar s=0$. Consequently, equality is attained in all the logarithmic inequalities used, and every point of the cycle is fixed.

For a continuous periodic trajectory, averaging the equations
\[
\frac{d}{dt}\ln s_i=(A_1s)_i
\]
over a period shows that its positive average belongs to $\ker A_1$. Therefore, the condition
\[
\ker A_1\cap(0,\infty)^{|I_1|}\ne\varnothing
\]
is necessary for any rank of $A_1$. When $\rank A_1=2$ the sufficiency of this condition and the periodicity of all non-equilibrium trajectories on $\Om_1$ follow from Lemma~6.
\end{proof}

\begin{remark}
Despite the coincidence of the fixed points of the operator $W$ and the equilibria of system \eqref{eq:5}, their periodic regimes generally differ. On a two-type face, a discrete two-cycle may fail to persist under the transition to the continuous analogue, whereas on $\Om_1$ system \eqref{eq:5} may possess non-constant periodic trajectories in the absence of non-constant cycles of the operator $W$.
\end{remark}

Let us now single out a class for which the $\omega$-limit sets of $W$ and of system \eqref{eq:5} coincide for every initial state.

Put, for each $z^0\in\Om$,
\[
q=\min\{i\in\{1,\ldots,n\}:x_i^0+y_i^0>0\}.
\]

\begin{theorem}
Let
\begin{equation}
\label{eq:8}
\left(1-p_{qi,q}^{(f)}\right)\left(1-p_{jq,q}^{(m)}\right)
<p_{jq,q}^{(f)}p_{qi,q}^{(m)},
\qquad 1\le q<n,\quad q<i,j\le n,
\end{equation}
where $i=j$ is allowed, and, when $n<\nu$, additionally
\begin{equation}
\label{eq:9}
\varepsilon_q>0,\qquad q=1,\ldots,n.
\end{equation}
Then
\[
\lim_{k\to\infty}W^k(z^0)=\lim_{t\to+\infty}z(t;z^0)=E_q,
\qquad
\omega_{\mathrm{disk}}(z^0)=\omega_{\mathrm{cont}}(z^0)=\{E_q\}.
\]
The fixed points of the operator $W$ and the equilibria of system \eqref{eq:5} are exhausted by $E_1,\ldots,E_n$, and, for each $q=1,\ldots,n$,
\[
\cA_{\mathrm{disk}}(E_q)=\cA_{\mathrm{cont}}(E_q)
=\{z\in\Om:x_i=y_i=0\ (i<q),\ x_q+y_q>0\}.
\]
All points $E_q$ are hyperbolic for $W$ and for system \eqref{eq:5}. The dimensions of the unstable and stable spectral subspaces of the reduced Jacobian matrices at $E_q$ equal, respectively, $q-1$ and $n+\nu-q-1$.

Relative to $\Om$ the point $E_1$ is locally exponentially asymptotically stable, while the points $E_q$, $q>1$, are unstable both for $W$ and for system \eqref{eq:5}. There are no non-constant cycles of the operator $W$ and no periodic orbits of system \eqref{eq:5} on
\end{theorem}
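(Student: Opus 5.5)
The plan is an induction on the faces that the conditions organize, with Lemma~8, item $1^\circ$, applied inside each face.

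For $z^0\in\Om$ let $q$ be as defined before the theorem. The set
\[
\Om^{(q)}=\{z\in\Om:\ x_i=y_i=0\ (i<q)\}
\]
is closed and invariant under both $W$ and system \eqref{eq:5}. This follows from the Volterra condition \eqref{eq:3}: if $x_i=y_i=0$, then $\dot x_i=\dot y_i=0$ and $x_i'=y_i'=0$. On $\Om^{(q)}$, after discarding the types $1,\dots,q-1$ (they are absent among the females and among the first $n$ males), we get a VQSOBP of the same form with female types $q,\dots,n$ and male types $q,\dots,\nu$. Within this subsystem the type $q$ plays the role of the distinguished index. Condition \eqref{eq:8}, with $i,j$ ranging over $\{q+1,\dots,n\}$, is exactly the hypothesis of Lemma~8, item $1^\circ$, for the subsystem, and \eqref{eq:9} supplies $\varepsilon_q>0$; the minimum defining $\varepsilon_q$ runs over the same indices $l>n$. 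In the edge case $q=n$ the subsystem has a single female type, and only the $\varepsilon_n$ condition is needed: Lemma~1-type monotonicity of $y_l$ for $l>n$ gives convergence to $E_n$.

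Applying Lemma~8, item $1^\circ$, to the subsystem yields convergence to $E_q$ for every initial state outside the face where $x_q=y_q=0$. Within $\Om^{(q)}$ that face is $\Om^{(q+1)}$ (for $q=n$ it is the empty set in $\Om$). Hence both the discrete trajectory and the continuous trajectory from $z^0$ converge to $E_q$. This gives the limit statement and the equality of the $\omega$-limit sets, and the basins are $\Om^{(q)}\setminus\Om^{(q+1)}$, i.e.\ the claimed set with $x_q+y_q>0$. Since every point converges to some $E_q$, every equilibrium or fixed point must equal its own limit. So the fixed points and the equilibria are exactly $E_1,\dots,E_n$. The same argument excludes non-constant cycles of $W$ and non-constant periodic orbits of the flow on all of $\Om$, because a nontrivial periodic orbit cannot converge to a point.

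For the spectral claims I use the block decomposition from Lemmas~5 and 7. At $E_q$ the reduced Jacobian splits into $2\times2$ blocks $J_{qi}^{(5)}$, $i\ne q$, together with $1\times1$ blocks $-p_{ql,q}^{(m)}$, $l>n$.
\begin{itemize}
\item For $i>q$, condition \eqref{eq:8} with $j=i$ gives $\Delta_{qi}>0$, so the block contributes two stable eigenvalues.
\item For $i<q$, condition \eqref{eq:8} applied with the roles swapped (base index $i$, partner $q>i$) gives $\Delta_{iq}>0$. The key identity to verify is $\Delta_{qi}=-\Delta_{iq}$, which is used in the proof of Lemma~8 and follows from the stochasticity relations $p_{iq,q}+p_{iq,i}=1$. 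Hence $\Delta_{qi}<0$, and the block has one positive and one negative eigenvalue.
\item Each one-dimensional block is stable by \eqref{eq:9}.
\end{itemize}
Counting gives $q-1$ unstable directions and $2(n-q)+(q-1)+(\nu-n)=n+\nu-q-1$ stable ones, with no zero eigenvalues. Lemma~7 then transfers hyperbolicity, the local stability of $E_1$ and the instability of $E_q$ for $q>1$ to $W$ as well.

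The main obstacle is the first step: checking carefully that the restricted system on $\Om^{(q)}$ really satisfies the hypotheses of Lemma~8, item $1^\circ$, with the same constants. In particular, the Lyapunov weight $\rho$ there must be chosen using only indices greater than $q$, and the maximum of $L_q$ must be attained exactly on $\Om^{(q+1)}$.
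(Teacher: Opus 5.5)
Your proposal is correct and follows essentially the same route as the paper. Both arguments restrict to the invariant face $\{x_i=y_i=0,\ i<q\}$ and apply item $1^\circ$ of Lemma~8 there, with $q=n$ handled separately. Both then read off the basins, the fixed points/equilibria and the absence of cycles from the common limit. The spectral count is the same too: $\Delta_{qi}>0$ for $i>q$, $\Delta_{qi}=-\Delta_{iq}<0$ for $i<q$, negative one-dimensional blocks, and Lemma~7.

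The one step to tighten is $q=n$. On that face the equations for $l>n$ are linear: $\dot y_l=-p_{nl,n}^{(m)}y_l$ and $y_l'=(1-p_{nl,n}^{(m)})y_l$. So what gives $y_l\to0$ for both $W$ and the flow is the bound $p_{nl,n}^{(m)}\ge\varepsilon_n>0$, not monotonicity alone. You should also state the discrete geometric decay explicitly rather than only citing the continuous-time Lemma~1.
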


\begin{proof}
Fix $z^0\in\Om$ and the corresponding index $q$. By the Volterra condition, the face defined by the equalities $x_i=y_i=0$ for $i<q$, is invariant for $W$ and for system \eqref{eq:5}. For $q<n$, after eliminating the absent types, conditions \eqref{eq:8}, \eqref{eq:9} ensure that the conditions of $1^\circ$ of Lemma~8 hold for the restriction to this face. Since $x_q^0+y_q^0>0$, both $\omega$-limit sets equal $\{E_q\}\{E_q\}$; compactness of $\Om$ gives convergence of both trajectories to $E_q$.

For $q=n$ on the face under consideration $x_n=1$. If $n<\nu$, then for $l>n$,
\[
y_l^{(k+1)}=\left(1-p_{nl,n}^{(m)}\right)y_l^{(k)}
\qquad\text{and}\qquad
\dot y_l=-p_{nl,n}^{(m)}y_l.
\]
From \eqref{eq:9} it follows that $y_l\to0$ as $k\to+\infty$, whence $y_n\to1$. When $n=\nu$ this face consists of $E_n$.

Consequently, the formula for the common limit is established for all initial states. It determines the stated basins of attraction; its application to constant trajectories rules out other fixed points and equilibria. Convergence of all trajectories to equilibria rules out non-constant periodic regimes.

Let now $q\in\{1,\ldots,n\}$ be arbitrary. Applying \eqref{eq:8} with $i=j$ and using $\Delta_{qi}=-\Delta_{iq}$, we obtain
\[
\Delta_{qi}>0\quad(i>q),\qquad \Delta_{qi}<0\quad(i<q).
\]
Consequently, taking \eqref{eq:9} into account, by Lemma~7 all $E_q$ are hyperbolic for $W$ and for system \eqref{eq:5}; $E_1$ is locally exponentially asymptotically stable, while $E_q$, $q>1$, are unstable relative to $\Om$.

Each $i<q$ corresponds to exactly one unstable eigenvalue of each of the two reduced Jacobian matrices, while the remaining eigenvalues are stable. Hence the dimension of the unstable spectral subspace equals $q-1$. Since the order of each reduced matrix equals $n+\nu-2$, the dimension of the stable spectral subspace equals $n+\nu-q-1$.
\end{proof}

\begin{remark}
The class of VQSOBPs satisfying the conditions of Theorem~1 is non-empty. Choose $a_0,b_0\in(0,1)$ so that $a_0+b_0>1$, and, for every pair of parents with distinct indices $i,j\le n$ set the probability of inheriting the smaller index equal to $a_0$ for female offspring and $b_0$ for male offspring. If $n<\nu$ additionally put
\[
p_{ql,q}^{(m)}=c_0\in(0,1),\qquad 1\le q\le n<l\le\nu.
\]
The remaining coefficients are determined by the stochastic and Volterra conditions. Then all conditions of Theorem~1 are satisfied.
\end{remark}

\section{Analytical solution of the system}

Eliminate the coordinates
\[
x_n=1-\sum_{i=1}^{n-1}x_i,\qquad
y_\nu=1-\sum_{l=1}^{\nu-1}y_l
\]
and introduce
\[
u=(x_1,\ldots,x_{n-1},y_1,\ldots,y_{\nu-1})^T\in\R^N,
\qquad N=n+\nu-2.
\]
System \eqref{eq:5} then takes the form
\begin{equation}
\label{eq:10}
\dot u=F_1(u),\qquad u(0)=u^0,
\end{equation}
where $u^0$ is obtained from the initial state $z^0\in\Om$ by eliminating the coordinates $x_n,y_\nu$.

Here
\[
F_1(u)=\cC+L_0u+B(u,u),
\]
where $\cC\in\R^N$, $L_0$ is a linear operator, and $B$ is a symmetric bilinear mapping. Note that for special relations between the coefficients, system \eqref{eq:10} can be integrated explicitly (see Section~8).

Let $u^*$ be an equilibrium of system \eqref{eq:10}. Setting $v=u-u^*$ we obtain
\begin{equation}
\label{eq:11}
\dot v=Jv+B(v,v),\qquad v(0)=v^0,
\end{equation}
where
\[
Jv=F_1'(u^*)v=L_0v+2B(u^*,v),\qquad v^0=u^0-u^*.
\]
Fix a norm $\|\cdot\|$ on $\R^N$ and the corresponding induced matrix norm.

Assume $J$ is a Hurwitz matrix. Then there exist $M\ge1$ and $\omega_0>0$, such that
\[
\|e^{Jt}\|\le Me^{-\omega_0t},\qquad t\ge0.
\]
Put
\[
\cL[w]=\dot w-Jw,\qquad \cN[w]=\cL[w]-B(w,w),
\]
and choose
\[
v_0(t)=e^{Jt}v^0.
\]
For a continuous bounded function $\varphi:[0,+\infty)\to\R^N$ put
\[
\cH[\varphi](t)=\int_0^t e^{J(t-\tau)}\varphi(\tau)\,d\tau.
\]
Then
\begin{equation}
\label{eq:12}
\|\cH[\varphi]\|_\infty\le K\|\varphi\|_\infty,
\qquad K=\frac{M}{\omega_0},
\qquad \|\varphi\|_\infty=\sup_{t\ge0}\|f(\varphi)\|.
\end{equation}
Define the deformation equation in the form
\[
(1-p)\cL[V-v_0]=p\hbar\cN[V],\qquad V(0,p,\hbar)=v^0,
\]
where $p\in[0,1]$, $\hbar\in\R$. We seek the solution as a formal series
\begin{equation}
\label{eq:13}
V(t,p,\hbar)=v_0(t)+\sum_{k=1}^{\infty}v_k(t,\hbar)p^k,
\qquad v_k(0,\hbar)=0,\quad k\ge1.
\end{equation}
Comparing coefficients at $p^k$ gives
\[
\cL[v_k-\chi_kv_{k-1}]=\hbar R_k,
\qquad \chi_1=0,\qquad \chi_k=1\quad(k\ge2),
\]
where
\[
R_k=\cL[v_{k-1}]-\sum_{i=0}^{k-1}B(v_i,v_{k-1-i}).
\]
Hence
\[
v_1(t,\hbar)=-\hbar\int_0^t e^{J(t-\tau)}B(v_0(\tau),v_0(\tau))\,d\tau,
\]
\begin{equation}
\label{eq:14}
\begin{aligned}
v_k(t,\hbar)={}&(1+\hbar)v_{k-1}(t,\hbar)\\
&-\hbar\int_0^t e^{J(t-\tau)}
\sum_{i=0}^{k-1}B\bigl(v_i(\tau,\hbar),v_{k-1-i}(\tau,\hbar)\bigr)\,d\tau,
\qquad k\ge2.
\end{aligned}
\end{equation}
Define
\[
C_B=\max_{\|a\|=\|b\|=1}\|B(a,b)\|,
\qquad
M_k=\sup_{t\ge0}\|v_k(t,\hbar)\|,\quad k\ge1,
\]
and put
\[
\upsilon=|1+\hbar|,\qquad \mu=|\hbar|KC_B.
\]

\begin{theorem}
Let $J$ be a Hurwitz matrix, $-2<\hbar<0$ and
\begin{equation}
\label{eq:15}
4|\hbar|KC_BM_0\le1-|1+\hbar|,
\end{equation}
where
\[
M_0=\sup_{t\ge0}\|e^{Jt}v^0\|.
\]
Then the series
\begin{equation}
\label{eq:16}
u(t)=u^*+\sum_{k=0}^{\infty}v_k(t,\hbar)
\end{equation}
converges absolutely and uniformly on $[0,+\infty)$, and its sum is the unique solution of problem \eqref{eq:10}.
\end{theorem}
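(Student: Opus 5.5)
The plan is to bound $M_k$ by a majorant recursion whose solution is controlled by a quadratic generating-function equation, and then to identify the sum of the series with the solution of \eqref{eq:11}.

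\textbf{Step 1 (the majorant recursion).} From the explicit formula for $v_1$ and from \eqref{eq:14}, using \eqref{eq:12} and the bilinear bound $\|B(a,b)\|\le C_B\|a\|\|b\|$, we get
\[
M_1\le \mu M_0^2,\qquad
M_k\le \upsilon M_{k-1}+\mu\sum_{i=0}^{k-1}M_iM_{k-1-i},\quad k\ge2 .
\]
Here $M_0$ is as in the statement. Define the majorant sequence $m_0=M_0$ and, for $k\ge1$,
\[
m_k=\upsilon m_{k-1}\chi_k+\mu\sum_{i=0}^{k-1}m_im_{k-1-i}.
\]
By induction, $M_k\le m_k$ for all $k$. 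Put $G(s)=\sum_{k\ge0}m_ks^k$. The recursion gives
\[
G-M_0=\upsilon s(G-M_0)+\mu sG^2 .
\]
Equivalently,
\[
\mu sG^2-(1-\upsilon s)G+M_0(1-\upsilon s)=0 .
\]
Solving, and taking the branch with $G(0)=M_0$,
\[
G(s)=\frac{(1-\upsilon s)-\sqrt{(1-\upsilon s)^2-4\mu M_0 s(1-\upsilon s)}}{2\mu s}.
\]

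\textbf{Step 2 (convergence at $s=1$).} The coefficients $m_k$ are nonnegative. By Pringsheim's theorem, the radius of convergence of $G$ is the first positive singularity. This is the smallest positive root of
\[
(1-\upsilon s)\bigl(1-\upsilon s-4\mu M_0 s\bigr)=0,
\]
namely $s_*=1/(\upsilon+4\mu M_0)$. Condition \eqref{eq:15} says exactly $\upsilon+4\mu M_0\le1$, so $s_*\ge1$, and $-2<\hbar<0$ guarantees $\upsilon<1$.

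If $s_*>1$, then $\sum m_k<\infty$ directly. The main obstacle is the boundary case of equality in \eqref{eq:15}, where $s_*=1$. There $G$ has a square-root singularity at $s=1$, but $G(1)$ is still finite: by monotone convergence of $G(s)$ as $s\uparrow1$, with nonnegative coefficients, we get $\sum m_k=G(1)<\infty$. This needs a short justification of the branch choice, namely that the minus sign keeps $G$ analytic and equal to the power series on $[0,s_*)$.

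Hence $\sum_k\sup_t\|v_k\|\le G(1)<\infty$. By the Weierstrass M-test, the series \eqref{eq:16} converges absolutely and uniformly on $[0,+\infty)$.

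\textbf{Step 3 (the sum solves the problem).} Let $S_m=\sum_{k=0}^m v_k$. Sum the relations \eqref{eq:14} together with $v_1$ for $k=1,\dots,m$. The telescoping gives
\[
v_m=\hbar\Bigl(S_{m-1}-v_0-\cH\Bigl[\sum_{k=0}^{m-1}\sum_{i=0}^{k}B(v_i,v_{k-i})\Bigr]\Bigr).
\]
Let $m\to\infty$. Then $v_m\to0$ uniformly. The Cauchy product of the absolutely convergent series converges to $B(S,S)$ uniformly, where $S=\sum v_k$. By \eqref{eq:12}, $\cH$ passes to the limit. Since $\hbar\neq0$, we obtain
\[
S=e^{Jt}v^0+\cH[B(S,S)],
\]
which is the variation-of-constants form of \eqref{eq:11}. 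Thus $S$ is continuous, differentiable, and solves \eqref{eq:11} with $S(0)=v^0$, so $u^*+S$ solves \eqref{eq:10}. Uniqueness follows from the local Lipschitz property of the polynomial right-hand side, as in the global existence argument of Section~2.
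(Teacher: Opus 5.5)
Your proof is correct. Steps 1 and 3 match the paper: you use the same majorant inequalities $M_1\le\mu M_0^2$ and $M_k\le\upsilon M_{k-1}+\mu\sum_i M_iM_{k-1-i}$, and the same passage to the limit in the summed recurrences, which you make more explicit through the telescoped identity for $v_m$.

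The genuine difference is Step 2. The paper never solves the majorant recursion. It sums it into the scalar inequality $A_L\le(1-\upsilon)M_0+\upsilon A_{L-1}+\mu A_{L-1}^2$ for the partial sums $A_L=\sum_{k\le L}M_k$. Because the right-hand side is nondecreasing in $A_{L-1}$, induction gives $A_L\le V^*$, where $V^*$ is the smaller root of $\mu V^2-(1-\upsilon)V+(1-\upsilon)M_0=0$. That root is real exactly under \eqref{eq:15}. The argument is elementary and handles the equality case of \eqref{eq:15} (a double root) at no extra cost.

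Your generating-function route needs more machinery:
\begin{itemize}
\item the identification of the analytic branch of $G$ with the power series $\sum m_ks^k$ on $|s|<s_*$;
\item Pringsheim's theorem, or just analyticity of $G$ there;
\item a monotone (Abel-type) limit at $s=1$ in the boundary case;
\item a separate remark for $\mu M_0=0$, where your closed form with $\mu$ in the denominator degenerates.
\end{itemize}
Nothing is lost in the final bound: $G(1)$ coincides exactly with the paper's $V^*$.

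What you gain is the growth rate. For $\mu M_0>0$, $s_*$ is a genuine branch point, so $\limsup_k m_k^{1/k}=\upsilon+4\mu M_0$. When \eqref{eq:15} holds strictly, this gives geometric decay of $M_k$ and explicit tail bounds for truncating the homotopy series. The paper's fixed-point induction does not directly provide such bounds.
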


\begin{proof}
From \eqref{eq:12} and \eqref{eq:14} we have
\[
M_1\le\mu M_0^2,\qquad
M_k\le\upsilon M_{k-1}+\mu\sum_{i=0}^{k-1}M_iM_{k-1-i},\qquad k\ge2.
\]
For
\[
A_L=\sum_{k=0}^L M_k,\qquad L\ge1,
\]
we obtain
\begin{equation}
\label{eq:17}
A_L\le(1-\upsilon)M_0+\upsilon A_{L-1}+\mu A_{L-1}^2.
\end{equation}
For $\mu>0$ put
\[
V^*=\frac{1-\upsilon-\sqrt{(1-\upsilon)^2-4\mu(1-\upsilon)M_0}}{2\mu}.
\]
Condition \eqref{eq:15} ensures the reality of $V^*$, and
\[
M_0\le V^*,\qquad
(1-\upsilon)M_0+\upsilon V^*+\mu(V^*)^2=V^*.
\]
Since the right-hand side of \eqref{eq:17} is non-decreasing in $A_{L-1}$, induction gives $A_L\le V^*$ for all $L$. For $\mu=0$ all $v_k$, $k\ge1$, equal to zero. Thus,
\[
\sum_{k=0}^{\infty}M_k<\infty,
\]
and series \eqref{eq:16} converges absolutely and uniformly.

Summing the recurrence relations \eqref{eq:14} over $k$ and using the absolute convergence of the series, we obtain an integral equation equivalent to problem \eqref{eq:11}. Hence the sum of series \eqref{eq:16} is a solution of problem \eqref{eq:10}. Uniqueness follows from the local Lipschitz continuity of the polynomial mapping $F_1$.
\end{proof}

\begin{remark}
For $-2<\hbar<0$ condition \eqref{eq:15} is equivalent to
\[
4KC_BM_0\le g(\hbar),\qquad
g(\hbar)=
\begin{cases}
-\dfrac{2+\hbar}{\hbar},&-2<\hbar<-1,\\[1ex]
1,&-1\le\hbar<0.
\end{cases}
\]
Hence values of $\hbar$ from the interval $[-1,0)$ give, within this criterion, the least restrictive sufficient condition
\[
4KC_BM_0\le1.
\]
\end{remark}

For illustration, let us present the zeroth and first terms $v_0(t)$ and $v_1(t,\hbar)$ of the homotopy series \eqref{eq:13}. Let
\[
v^0=\sum_{i=1}^N a_{0i}e_i,
\qquad
B(e_i,e_j)=\sum_{s=1}^N b_{ij}^{s}e_s,
\]
where $a_{0i}$ is the coordinate of the initial vector in the Jordan basis; $e_i$ is a vector of the Jordan basis; $h_i$ is the depth of the vector $e_i$ in its chain; we set $e_{i-q}=0$, if the index $i-q$ falls outside this chain.

Then
\[
v_0(t)=e^{Jt}v^0
=\sum_{i=1}^N\sum_{q=0}^{h_i}a_{0i}e^{\lambda_it}\frac{t^q}{q!}e_{i-q},
\]
where $\lambda_i$ is the eigenvalue corresponding to the Jordan chain containing $e_i$; the depth of the initial eigenvector of the chain is taken to equal zero.

The first term has the form
\begin{align*}
v_1(t,\hbar)
={}&-\hbar
\sum_{i=1}^N\sum_{q=0}^{h_i}
\sum_{j=1}^N\sum_{r=0}^{h_j}
\sum_{s=1}^N\sum_{m=0}^{h_s}
\sum_{l=0}^{m}
\frac{a_{0i}a_{0j}b_{i-q,j-r}^{s}}{q!\,r!\,m!}
\binom{m}{l}(-1)^l t^{m-l}\varkappa!\\
&\hspace{25mm}\times e^{\lambda_st}I_{\varkappa}(\widetilde\lambda,t)e_{s-m},
\end{align*}
where
\[
\varkappa=q+r+l,\qquad \widetilde\lambda=\lambda_s-\lambda_i-\lambda_j,
\]
and
\[
I_{\varkappa}(\widetilde\lambda,t)=
\begin{cases}
\displaystyle
\frac{1}{\widetilde\lambda^{\varkappa+1}}
\left[1-e^{-\widetilde\lambda t}
\sum_{l=0}^{\varkappa}\frac{(\widetilde\lambda t)^l}{l!}\right],
&\widetilde\lambda\ne0,\\[3ex]
\displaystyle
\frac{t^{\varkappa+1}}{(\varkappa+1)!},
&\widetilde\lambda=0.
\end{cases}
\]

\begin{lemma}
Let $J$ be a Hurwitz matrix with Jordan data $\{\lambda_i,h_i\}_{i=1}^N$. Then for every $k\ge0$ the term $v_k(t,\hbar)$ of series \eqref{eq:16} has the form
\[
v_k(t,\hbar)=\sum_{s=1}^N\sum_{\lambda\in S_k}\sum_{l=0}^{P_k}
C_{s,l,\lambda}^{(k)}t^le^{\lambda t}e_s,
\]
where
\[
S_k=\{\lambda_{i_1}+\cdots+\lambda_{i_r}:1\le r\le k+1\},
\qquad i_1,\ldots,i_r\in\{1,\ldots,N\},
\]
and
\[
P_k\le h_{\max}+k(2h_{\max}+1),\qquad
h_{\max}=\max_{1\le i\le N}h_i.
\]
The coefficients $C_{s,l,\lambda}^{(k)}$ pare determined recurrently by the quantities $a_{0i},\lambda_i,h_i,b_{ij}^{s}$ and $\hbar$.

If the matrix $J$ is diagonalizable and its eigenvalues are real, negative, and linearly independent over $\mathbb{Q}$, then $C_{s,l,\lambda}^{(k)}=0$, for $l\ge1$, i.e., each $v_k(t,\hbar)$ is a finite sum of exponentials.
\end{lemma}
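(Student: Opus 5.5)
The plan is induction on $k$ using the recurrence \eqref{eq:14}. We track two things: which exponents can occur, and the maximal polynomial degree $P_k$.

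\emph{Base case.} For $k=0$ the formula $v_0(t)=e^{Jt}v^0$ written in the Jordan basis already has the required form. The exponents lie in $\{\lambda_i\}\subset S_0$ and the polynomial degree is at most $h_{\max}$, so $P_0\le h_{\max}$.

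\emph{Quasi-polynomial class.} Call a vector function a quasi-polynomial of degree at most $P$ over an exponent set $S$ if it is a finite sum of terms $t^le^{\lambda t}e_s$ with $\lambda\in S$ and $l\le P$. Two closure properties are needed.
\begin{enumerate}[label=(\roman*)]
\item Bilinearity. By bilinearity of $B$ and the expansion $B(e_i,e_j)=\sum_s b^s_{ij}e_s$, we have $B(v_i,v_{k-1-i})$ with exponents in $S_i+S_{k-1-i}$. Since $S_i+S_{k-1-i}$ consists of sums of at most $(i+1)+(k-i)=k+1$ eigenvalues, it is contained in $S_k$. The degree is at most $P_i+P_{k-1-i}$.
\item The operator $\cH$. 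Apply $\cH$ to a single term $t^me^{\lambda\tau}e_s$. Writing $e^{J(t-\tau)}e_s=\sum_{q}e^{\lambda_s(t-\tau)}\frac{(t-\tau)^q}{q!}e_{s-q}$ with $q\le h_{\max}$ and expanding $(t-\tau)^q$ binomially reduces everything to integrals $\int_0^t\tau^{\varkappa}e^{(\lambda-\lambda_s)\tau}\,d\tau$ with $\varkappa\le m+h_{\max}$. These are exactly the $I_\varkappa$ computed above.
\begin{itemize}
\item If $\widetilde\lambda=\lambda_s-\lambda\ne0$, the result is $e^{\lambda_s t}\cdot$const plus $e^{\lambda t}\cdot$(polynomial of degree $\le m+h_{\max}$), again multiplied by the outer $t^{q-j}$.
\item If $\widetilde\lambda=0$ (resonance), the degree rises by one.
\end{itemize}
The exponents $\lambda_s$ and $\lambda$ both lie in $S_k$, because $S_0\subset S_k$ and $S_{k-1}\subset S_k$.
\end{enumerate}

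\emph{Degree count.} Combining the pieces, one step of \eqref{eq:14} gives a degree bound of the form $P_k\le \max(P_{k-1},\max_i(P_i+P_{k-1-i})+h_{\max}+1)$, where the careful accounting of the $t^{q}$ factors must be done so the total added is at most $h_{\max}+1$. Inserting the inductive hypothesis $P_i\le h_{\max}+i(2h_{\max}+1)$ gives
\[
P_i+P_{k-1-i}+h_{\max}+1\le 2h_{\max}+(k-1)(2h_{\max}+1)+h_{\max}+1=h_{\max}+k(2h_{\max}+1),
\]
as required. The term $(1+\hbar)v_{k-1}$ causes no trouble, since $S_{k-1}\subset S_k$ and $P_{k-1}\le P_k$. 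Recurrence of the coefficients follows because each operation is explicit in $a_{0i},\lambda_i,h_i,b^s_{ij},\hbar$.

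\emph{Diagonalizable case.} Here $h_{\max}=0$, so the only source of powers of $t$ is resonance $\widetilde\lambda=\lambda_s-\mu=0$ with $\mu\in S_{k-1}+S_{\cdot}$, a sum of $r\ge2$ eigenvalues. Rational independence forbids $\lambda_s=\lambda_{i_1}+\dots+\lambda_{i_r}$ with $r\ge2$. Indeed, moving $\lambda_s$ to one side gives a nontrivial integer relation, unless $r=1$, and $r\ge2$ always holds for terms produced by $B$. Thus $\widetilde\lambda\ne0$ always, and induction with $P_{k-1}=0$ gives $P_k=0$.

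\emph{Main obstacle.} I expect the main obstacle to be the precise degree bookkeeping in the $\cH$ step. The statement's constant $2h_{\max}+1$ suggests one chain's $h_{\max}$ enters through the $(t-\tau)^q$ expansion and the $+1$ through resonance. I would first verify that the outer factor $t^{q-j}$ and the inner $\tau^{m+j}$ combine to a total degree $\le m+h_{\max}+1$, rather than $m+2h_{\max}+1$. If only the weaker bound is obtained, the looser per-step increment still fits because the stated bound adds $2h_{\max}+1$ per step and the product term contributes only $P_i+P_{k-1-i}$.
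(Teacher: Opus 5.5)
Your argument follows the paper's proof: induction on \eqref{eq:14}, where the bilinear step adds exponents and polynomial degrees, the Duhamel operator $\cH$ contributes only single eigenvalues $\lambda_s\in S_0\subset S_k$ and raises the degree by at most $h_{\max}+1$, and $\mathbb{Q}$-independence rules out the resonances $\lambda_s=\lambda_{i_1}+\cdots+\lambda_{i_r}$ with $r\ge2$ in the diagonalizable case. Your justification of the last point, via the coefficient sum $r-1\ne0$, is more explicit than the paper's.

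The fallback in your closing paragraph is false. Suppose $\cH$ could raise the degree by $2h_{\max}+1$. Then your own display would give $P_k\le 2h_{\max}+k(2h_{\max}+1)$, which overshoots the claim by $h_{\max}$. The per-step constant $2h_{\max}+1$ is already fully spent. Of it, $h_{\max}$ comes from the extra factor: with $i=0$ one has $P_0+P_{k-1}=P_{k-1}+h_{\max}$. The remaining $h_{\max}+1$ comes from $\cH$.

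So the check you postpone is required, but it is immediate. Take the term $t^{q-j}\int_0^t\tau^{m+j}e^{(\lambda-\lambda_s)\tau}\,d\tau$ with $q\le h_{\max}$.
\begin{itemize}
\item When $\lambda\ne\lambda_s$, the factor multiplying $e^{\lambda t}$ has degree at most $(q-j)+(m+j)=m+q$, and the factor multiplying $e^{\lambda_s t}$ has degree at most $q-j$.
\item When $\lambda=\lambda_s$, the term is a multiple of $t^{m+q+1}$.
\end{itemize}
Hence the increment is at most $h_{\max}+1$, and the induction closes exactly as you computed.
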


\begin{proof}
The assertion follows by induction on $k$ from \eqref{eq:14}. Under quadratic convolution, the exponents of the exponentials and the degrees of the polynomial factors add, while the action of the Duhamel operator augments the set of possible exponents by the eigenvalues of $J$ and increases the maximal degree of the polynomial factor by at most $h_{\max}+1$. This gives the stated $S_k$ and the estimate for $P_k$. In the diagonalizable case, linear independence of the eigenvalues over $\mathbb{Q}$ excludes resonances, so polynomial factors do not arise in subsequent terms either.
\end{proof}

\section{The general case of quadratic-linear first-order systems of ODEs in $\R^N$}

The applied HAM scheme extends to autonomous first-order systems of ODEs with quadratic right-hand sides, provided the convergence conditions established in Section~4 are satisfied.

Consider the Cauchy problem for the system
\begin{equation}
\label{eq:18}
\dot x_i=\sum_{j=1}^N a_{ij}x_j^2+
\sum_{1\le j<k\le N}b_{i,jk}x_jx_k+
\sum_{j=1}^N c_{ij}x_j+g_i,
\qquad x_i(0)=x_i^0,\quad i=1,\ldots,N,
\end{equation}
where here $N\ge1$ is arbitrary, $x=(x_1,\ldots,x_N)\in\R^N$, and $a_{ij},b_{i,jk},c_{ij},g_i\in\R$ are given coefficients.

Since system \eqref{eq:18} has the same quadratic structure as system \eqref{eq:10}, Theorem~2 and the corresponding HAM scheme apply to the Cauchy problem \eqref{eq:18} whenever the conditions of Theorem~2 are satisfied.

It is natural to compare the resulting HAM scheme with the construction of spectral power series in \cite{ref23}. In that work, for real negative eigenvalues linearly independent over $\mathbb{Q}$, an exponential expansion converging for $t>t_0$ was obtained. In the examples considered there, the free parameters are linked to the initial data through additional algebraic relations. In HAM, the initial conditions are accounted for directly in the construction of the series, and the criterion of Theorem~2 requires neither the reality of the eigenvalues nor absence of resonance. The spectral structure of the terms of the series is described by Lemma~10 and admits polynomial factors multiplying the exponentials.

\section{A special case of the VQSOBP ($n=\nu=2$)}

Let us set
\[
x=x_1,\qquad x_2=1-x,\qquad y=y_1,\qquad y_2=1-y.
\]
In the notation of Section~3.1 we have
\[
a_1=1-p_{12,1}^{(f)},\qquad
b_1=p_{21,1}^{(f)},\qquad
c_1=1-p_{21,1}^{(m)},\qquad
d_1=p_{12,1}^{(m)}.
\]
Then system \eqref{eq:5} on $\Om_2=S^1\times S^1$ takes the form
\begin{equation}
\label{eq:19}
\begin{cases}
\dot x=b_1y(1-x)-a_1x(1-y)=f_1(x,y),\\
\dot y=d_1x(1-y)-c_1y(1-x)=f_2(x,y),
\end{cases}
\end{equation}
where $a_1,b_1,c_1,d_1\in[0,1]$. The set $\Om_2$ is positively invariant.

The points $O=(0,0)$ and $E=(1,1)$ are equilibria. Here
\[
\det J(O)=a_1c_1-b_1d_1,\qquad
\det J(E)=b_1d_1-a_1c_1,
\]
and the discriminants of the characteristic equations equal
\[
D_O=(a_1-c_1)^2+4b_1d_1,\qquad
D_E=(b_1-d_1)^2+4a_1c_1.
\]
Consequently the eigenvalues are real: for $b_1d_1<a_1c_1$ the point $O$ is an asymptotically stable node, while $E$ is a saddle; for $b_1d_1>a_1c_1$ their roles are interchanged. At $O$ only the types $\cF_2,\cM_2$, are present, while at $E$ only $\cF_1,\cM_1$ are present. In degenerate cases additional boundary equilibria may arise, which are not considered further.

Further, assume $a_1,b_1,c_1,d_1>0$. Interior equilibria exist if and only if
\[
b_1d_1=a_1c_1.
\]
In this case the set of all equilibria has the form
\[
\Gamma_1=\left\{(x,y)\in\Om_2:
 y=\frac{a_1x}{b_1+(a_1-b_1)x}\right\}.
\]
The curve $\Gamma_1$ is strictly increasing and joins $O$ and $E$; the interior equilibria constitute $\Gamma_1\setminus\{O,E\}$.

For the Dulac function
\[
D(x,y)=\frac{1}{x(1-x)y(1-y)}
\]
at $(x,y)\in\operatorname{int}\Om_2$ we have
\[
\frac{\partial(Df_1)}{\partial x}+
\frac{\partial(Df_2)}{\partial y}<0,
\]
so that in the interior of $\Om_2$ there are no non-constant periodic orbits.

For $b_1d_1\ne a_1c_1$ consider the function
\[
H_3(x,y)=y+\frac{c_1}{b_1}x.
\]
By virtue of \eqref{eq:19} we obtain
\[
\dot H_3=\frac{b_1d_1-a_1c_1}{b_1}x(1-y).
\]
Consequently, for interior initial states, when $b_1d_1<a_1c_1$ the trajectory tends to the equilibrium $O=(0,0)$, and when $b_1d_1>a_1c_1$ -- to the equilibrium $E=(1,1)$ as $t\to+\infty$.

When $b_1d_1=a_1c_1$ the function $H_3$ is a first integral. Each of its level lines intersects $\Gamma_1$ at exactly one point, to which the corresponding interior trajectory converges monotonically. On $\Gamma_1$ one eigenvalue of the Jacobian matrix is zero and the second is negative; hence $\Gamma_1$ is a normally attracting curve of equilibria, whereas its individual points are not asymptotically stable (Fig.~\ref{fig:gamma}).

\begin{figure}[htbp]
\centering
\includegraphics[width=0.50\textwidth]{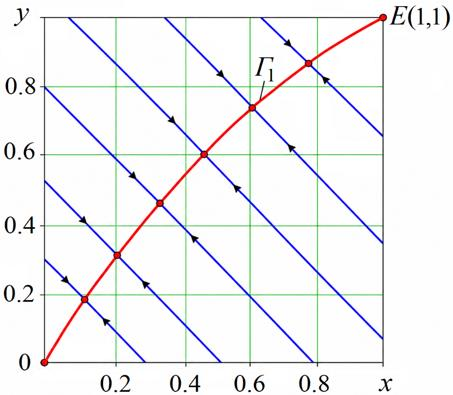}
\caption{Schematic representation of the curve of equilibria $\Gamma_1$ and trajectories on the level sets of $H_3$ when $b_1d_1=a_1c_1$.}
\label{fig:gamma}
\end{figure}

Discrete dynamical systems and their continuous analogues may exhibit different periodic regimes (see Remark~3). Consider the continuous analogue of $W_{10}$ \cite{ref10}, which in independent coordinates has the form
\[
W_{10}(x,y)=(y,x)
\]
and corresponds to $a_1=b_1=c_1=d_1=1$. For $x^0\ne y^0$ the discrete trajectory forms a cycle of minimal period two, whereas the continuous analogue
\[
\dot x=y-x,\qquad \dot y=x-y
\]
has the solution
\[
x(t)=\frac{x^0+y^0}{2}+\frac{x^0-y^0}{2}e^{-2t},
\qquad
y(t)=\frac{x^0+y^0}{2}-\frac{x^0-y^0}{2}e^{-2t},
\]
so that
\[
x(t),y(t)\to\frac{x^0+y^0}{2}\qquad\text{as }t\to+\infty.
\]
Thus the passage from $W$ to $F(z)=W(z)-z$, does not, in general, preserve periodic dynamics (see Section~3). Since $J_F=J_W-I_2$, to each eigenvalue $\mu_W$ of the matrix $J_W$ there corresponds an eigenvalue $\lambda_F=\mu_W-1$ of the matrix $J_F$.

For $W_{10}$ the value $\mu_W=-1$, associated with the two-cycle dynamics, corresponds to $\lambda_F=-2$, i.e., to an exponentially stable direction of the continuous flow. The second eigenvalue $\mu_W=1$ passes into $\lambda_F=0$, corresponding to the direction along the diagonal of equilibria. In particular,
\[
|\mu_W|<1\ \Longrightarrow\ \operatorname{Re}(\mu_W-1)<0,
\]
but the converse statement is not true.

\section{Illustrative examples}

\begin{example}
Consider the Cauchy problem for the continuous analogue of $W_9$ \cite{ref10}:
\[
\begin{cases}
\dot x_1=y_1-x_1,\\
\dot x_2=y_2-x_2,\\
\dot y_1=x_1y_2,\\
\dot y_2=x_2y_2-y_2,
\end{cases}
\qquad
(x_1(0),x_2(0),y_1(0),y_2(0))=(x_1^0,x_2^0,y_1^0,y_2^0)\in\Om_2,
\]
for which no solution in quadratures was obtained in \cite{ref26}.

Eliminating $x_2=1-x_1$ and $y_2=1-y_1$, we obtain
\begin{equation}
\label{eq:20}
\begin{cases}
\dot x_1=y_1-x_1,\\
\dot y_1=x_1(1-y_1),
\end{cases}
\qquad
(x_1(0),y_1(0))=(x_1^0,y_1^0).
\end{equation}
Its equilibria are $(0,0)$ -- a hyperbolic saddle, and $(1,1)$ -- an asymptotically stable improper node.

Set $u=x_1-1$, $v=y_1-1$. Then
\begin{equation}
\label{eq:21}
\begin{cases}
\dot u=v-u,\\
\dot v=-uv-v,
\end{cases}
\end{equation}
and, since $v=\dot u+u$,
\begin{equation}
\label{eq:22}
\ddot u+2\dot u+u\dot u+u+u^2=0,
\end{equation}
\begin{equation}
\label{eq:23}
u(0)=u_0,\qquad \dot u(0)=v_0-u_0.
\end{equation}
The stationary solutions are $u\equiv0$ and $u\equiv-1$. For $v_0=0$ the trajectory lies on the invariant line $v=0$, and
\[
u(t)=u_0e^{-t}.
\]

Consider now the more general problem
\begin{equation}
\label{eq:24}
\ddot u+2\dot u+u=f(u,\dot u),
\end{equation}
\begin{equation}
\label{eq:25}
u(0)=u_0,\qquad \dot u(0)=v_0-u_0,
\end{equation}
where it is assumed that for $|U|<r_1$, $|V|<r_2$ the function $f$ admits the absolutely convergent power series representation
\[
f(U,V)=\sum_{\mathfrak n,\mathfrak m\ge0}a_{\mathfrak n,\mathfrak m}U^{\mathfrak n}V^{\mathfrak m},
\qquad a_{\mathfrak n,\mathfrak m}\in\R,
\]
where $0<r_1,r_2\le+\infty$.

Set
\[
\cN_1[u]=\ddot u+2\dot u+u-f(u,\dot u),
\qquad
\cL_1[\varphi]=\ddot\varphi+2\dot\varphi+\varphi.
\]
The solution of the problem $\cL_1[\varphi]=g$ with initial conditions \eqref{eq:25} has the form
\[
\varphi(t)=(u_0+v_0t)e^{-t}+\int_0^t(t-\tau)e^{-(t-\tau)}g(\tau)\,d\tau.
\]
For zero initial data and a bounded continuous function $g$, the estimates
\begin{equation}
\label{eq:26}
\|\varphi\|_\infty\le\|g\|_\infty,
\qquad
\|\dot\varphi\|_\infty\le2\|g\|_\infty
\end{equation}
hold.

For $\hbar\ne0$ introduce the deformation equation
\[
(1-p)\cL_1[U]=\hbar p\cN_1[U],
\]
\[
U(0,p,\hbar)=u_0,\qquad \dot U(0,p,\hbar)=v_0-u_0.
\]
We seek the solution as a formal series
\begin{equation}
\label{eq:27}
U(t,p,\hbar)=\sum_{k=0}^{\infty}u^{[k]}(t,\hbar)p^k,
\qquad
u^{[k]}=\left.\frac{1}{k!}\frac{\partial^kU}{\partial p^k}\right|_{p=0}.
\end{equation}
Here
\[
u^{[0]}(t)=(u_0+v_0t)e^{-t},
\qquad
u^{[k]}(0,\hbar)=\dot u^{[k]}(0,\hbar)=0,\quad k\ge1.
\]
Let
\[
f(U,\dot U)=\sum_{k=0}^{\infty}F^{[k]}(t,\hbar)p^k.
\]
Comparing coefficients of like powers of $p$ gives
\[
u^{[k]}(t,\hbar)=\int_0^t(t-\tau)e^{-(t-\tau)}R_k(\tau)\,d\tau,
\qquad k\ge1,
\]
where
\[
R_1=-\hbar F^{[0]},\qquad
R_k=(1+\hbar)\cL_1[u^{[k-1]}]-\hbar F^{[k-1]},\quad k\ge2.
\]
For $t\ge0$ we have
\begin{equation}
\label{eq:28}
|u^{[0]}(t)|\le |u_0|+\frac{|v_0|}{e}=:U_0,
\qquad
|\dot u^{[0]}(t)|\le |v_0-u_0|+\frac{|v_0|}{e}=:V_0.
\end{equation}
It is further assumed that $U_0<r_1$, $V_0<r_2$. For fixed $\hbar$ set
\[
M_k=\|u^{[k]}\|_\infty,
\qquad
N_k=\|\dot u^{[k]}\|_\infty,
\qquad
\rho_k=\|R_k\|_\infty,
\qquad k\ge1.
\]

From \eqref{eq:26} it follows that
\begin{equation}
\label{eq:29}
M_k\le\rho_k,\qquad N_k\le2\rho_k,\qquad k\ge1.
\end{equation}
Set
\[
U=u^{[0]}+\xi_1,\qquad \dot U=\dot u^{[0]}+\eta_1,
\]
where
\[
\xi_1=\sum_{k=1}^{\infty}u^{[k]}(t,\hbar)p^k,
\qquad
\eta_1=\sum_{k=1}^{\infty}\dot u^{[k]}(t,\hbar)p^k.
\]
Define
\[
\widetilde f(U,V)=\sum_{\mathfrak n,\mathfrak m\ge0}
|a_{\mathfrak n,\mathfrak m}|U^{\mathfrak n}V^{\mathfrak m},
\qquad
\Psi(z)=\sum_{k=1}^{\infty}\rho_k z^k.
\]
Then
\[
\widehat F(z)=\sum_{k=0}^{\infty}\widehat F^{[k]}z^k
\preceq
\widetilde f(U_0+\Psi(z),V_0+2\Psi(z)),
\]
where
\[
\widehat F^{[k]}=\sup_{t\ge0}|F^{[k]}(t,\hbar)|.
\]
Let $-2<\hbar<0$,
\[
\kappa=\frac{|\hbar|}{1-\upsilon},
\]
and
\[
\widetilde T(V)=\kappa\widetilde f(U_0+V,V_0+2V),
\qquad
V_{\max}=\min\left\{r_1-U_0,\frac{r_2-V_0}{2}\right\}.
\]

\begin{lemma}
If there exists $V^*\in(0,V_{\max})$, for which
\[
\widetilde T(V^*)\le V^*,
\]
then
\[
\sum_{k=1}^{\infty}\rho_k\le V^*.
\]
\end{lemma}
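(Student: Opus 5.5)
The plan is to mimic the majorant argument in the proof of Theorem~2: derive a recursive inequality for the partial sums $A_L=\sum_{k=1}^{L}\rho_k$, and show by induction that $A_L\le V^*$ for every $L$.

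\emph{Step 1: bounding $\rho_k$.} For $k\ge2$ we have $\cL_1[u^{[k-1]}]=R_{k-1}$, since $u^{[k-1]}$ has zero initial data and solves $\cL_1[u^{[k-1]}]=R_{k-1}$. Hence
\[
R_k=(1+\hbar)R_{k-1}-\hbar F^{[k-1]},
\]
which gives
\[
\rho_1\le|\hbar|\widehat F^{[0]},\qquad \rho_k\le\upsilon\rho_{k-1}+|\hbar|\widehat F^{[k-1]}\quad(k\ge2).
\]
Summing over $k=1,\dots,L$ yields
\[
A_L\le\upsilon A_{L-1}+|\hbar|\sum_{k=0}^{L-1}\widehat F^{[k]}.
\]

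\emph{Step 2: the causal majorant.} The key point is that $F^{[k]}$ depends only on $u^{[0]},\dots,u^{[k]}$ and their derivatives. Indeed, $F^{[k]}$ is the $p^k$ coefficient of $f(u^{[0]}+\xi_1,\dot u^{[0]}+\eta_1)$. Expand $f$ in powers of $\xi_1,\eta_1$ around $(u^{[0]},\dot u^{[0]})$, or directly in $U,V$. Then use \eqref{eq:28}, the bounds \eqref{eq:29} ($M_j\le\rho_j$, $N_j\le2\rho_j$), and the fact that the coefficients of $\widetilde f$ are nonnegative. Together these give
\[
\sum_{k=0}^{L-1}\widehat F^{[k]}\le\widetilde f\bigl(U_0+A_{L-1},\,V_0+2A_{L-1}\bigr),
\]
provided $U_0+A_{L-1}<r_1$ and $V_0+2A_{L-1}<r_2$. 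Two things justify this. First, the truncation of $\Psi$ to the terms $k\le L-1$ majorizes all $p^k$ coefficients with $k\le L-1$. Second, evaluating the truncated majorant series at $z=1$ bounds the sum of its coefficients, because all coefficients are nonnegative and $\widetilde f$ converges absolutely inside the polydisc of radii $r_1,r_2$.

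\emph{Step 3: induction.} Write $\widetilde T_0(V)=|\hbar|\,\widetilde f(U_0+V,V_0+2V)=(1-\upsilon)\widetilde T(V)$. Assume $A_{L-1}\le V^*<V_{\max}$; the base case is $A_0=0$. Then Step~2 applies, and since $\widetilde f$ is nondecreasing in each argument,
\[
A_L\le\upsilon V^*+(1-\upsilon)\widetilde T(V^*)\le\upsilon V^*+(1-\upsilon)V^*=V^*.
\]
Here we use $-2<\hbar<0$, so that $\upsilon<1$ and $\kappa$ is well defined. Letting $L\to\infty$ gives $\sum\rho_k\le V^*$.

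The main obstacle is Step~2: making the majorant bound rigorous uniformly in $t$. The sup over $t$ must be taken coefficientwise inside the composition, giving $|F^{[k]}(t)|\le$ the $p^k$ coefficient of $\widetilde f(|u^{[0]}|+\sum|u^{[j]}|p^j,\dots)$. We also need $f$'s series to converge at the arguments used, which is exactly where $V^*<V_{\max}$ enters. I would handle this by a standard Cauchy-product majorization, applied term by term to each monomial $U^{\mathfrak n}V^{\mathfrak m}$.
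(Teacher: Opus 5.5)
Your proposal is correct and follows the paper's proof essentially step for step. The paper likewise derives $S_m\le\upsilon S_{m-1}+|\hbar|\sum_{k=0}^{m-1}\widehat F^{[k]}$, bounds the $\widehat F$-sum by $\widetilde f(U_0+V^*,V_0+2V^*)$ via the coefficientwise majorant under the inductive hypothesis $S_{m-1}\le V^*$, and closes the induction with $\upsilon V^*+(1-\upsilon)\widetilde T(V^*)\le V^*$. Your additions make explicit what the paper leaves implicit: the identity $R_k=(1+\hbar)R_{k-1}-\hbar F^{[k-1]}$ obtained from $\cL_1[u^{[k-1]}]=R_{k-1}$, and the truncation of $\Psi$ together with the uniform-in-$t$ coefficientwise majorization.
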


\begin{proof}
Set
\[
S_m=\sum_{k=1}^{m}\rho_k,\qquad S_0=0.
\]
From the recurrence estimates we have
\[
S_m\le\upsilon S_{m-1}+|\hbar|\sum_{k=0}^{m-1}\widehat F^{[k]}.
\]
The coefficientwise majorant for $\widehat F$ and the non-negativity of the coefficients of $\widetilde f$ give, for $S_{m-1}\le V^*$,
\[
\sum_{k=0}^{m-1}\widehat F^{[k]}
\le\widetilde f(U_0+V^*,V_0+2V^*).
\]
Hence $S_m\le V^*$. By induction, $S_m\le V^*$ for all $m\ge1$. Passing to the limit as $m\to\infty$, gives the required inequality.
\end{proof}

For $A\in(0,V_{\max})$ denote
\[
\zeta_A=\kappa\left[
\widetilde f_U(U_0+A,V_0+2A)+
2\widetilde f_V(U_0+A,V_0+2A)
\right].
\]

\begin{theorem}
Let $-2<\hbar<0$ and suppose there exists $A\in(0,V_{\max})$, for which
\begin{equation}
\label{eq:30}
\zeta_A<1,
\end{equation}
\begin{equation}
\label{eq:31}
\kappa\widetilde f(U_0,V_0)\le(1-\zeta_A)A.
\end{equation}
Then, at $p=1$ series \eqref{eq:27}, together with the series of its first and second derivatives, converges absolutely and uniformly on $[0,+\infty)$, and the sum of the series is the unique solution of problem \eqref{eq:24}, \eqref{eq:25}.
\end{theorem}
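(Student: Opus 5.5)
The plan is to reduce the statement to Lemma~11 by producing a number $V^*\in(0,V_{\max})$ with $\widetilde T(V^*)\le V^*$. We then pass from the bound $\sum_k\rho_k\le V^*$ to the convergence of the series and its derivatives, and finally identify the sum as the solution.

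\emph{Step 1: a fixed-point candidate.} Take $V^*=A$. Since $\widetilde f$ has non-negative coefficients and converges absolutely at $(U_0+A,V_0+2A)$, the function $\phi(s)=\kappa\widetilde f(U_0+s,V_0+2s)$ is convex and non-decreasing on $[0,A]$. Its derivative is exactly $\zeta_s$, which is non-decreasing in $s$. The mean value theorem then gives
\[
\widetilde T(A)=\phi(0)+\int_0^A\zeta_s\,ds\le\kappa\widetilde f(U_0,V_0)+\zeta_AA\le(1-\zeta_A)A+\zeta_AA=A,
\]
using \eqref{eq:31}. Hence Lemma~11 applies and yields $\sum_{k\ge1}\rho_k\le A$. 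Condition \eqref{eq:30} is what makes \eqref{eq:31} a nonvacuous requirement with $A>0$. If needed, it also gives contraction of $\widetilde T$ on $[0,A]$, so the smallest fixed point exists there.

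\emph{Step 2: convergence.} By \eqref{eq:29}, $\sum_k M_k\le A$ and $\sum_k N_k\le 2A$, so the series for $U$ and $\dot U$ at $p=1$ converge absolutely and uniformly on $[0,+\infty)$. Their sums satisfy $|U|\le U_0+A<r_1$ and $|\dot U|\le V_0+2A<r_2$. For the second derivative, note that $u^{[k]}$ solves $\cL_1[u^{[k]}]=R_k$, so $\ddot u^{[k]}=R_k-2\dot u^{[k]}-u^{[k]}$ and $\|\ddot u^{[k]}\|_\infty\le4\rho_k$. This series is therefore dominated by $4A$ as well.

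\emph{Step 3: the sum is the solution.} Summing the relations $\cL_1[u^{[k]}]=R_k$ over $k\ge1$ is justified by the uniform convergence of all three series. It gives
\[
\cL_1[U-u^{[0]}]=(1+\hbar)\cL_1[U-u^{[0]}]...
\]
The bookkeeping needs care here. Writing $S=\sum_{k\ge0}u^{[k]}$, the telescoping of $(1+\hbar)\cL_1[u^{[k-1]}]$ together with $\cL_1[u^{[0]}]=0$ yields
\[
\cL_1[S]=(1+\hbar)\cL_1[S]-\hbar\sum_kF^{[k]}.
\]
Since $U_0+A<r_1$ and $V_0+2A<r_2$, the majorant $\widehat F\preceq\widetilde f(U_0+\Psi,V_0+2\Psi)$ justifies the rearrangement $\sum_kF^{[k]}=f(S,\dot S)$. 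Therefore $-\hbar\cL_1[S]=-\hbar f(S,\dot S)$, and because $\hbar\ne0$ this gives $\cL_1[S]=f(S,\dot S)$, which is \eqref{eq:24}. The initial data \eqref{eq:25} hold because $u^{[0]}$ carries them and $u^{[k]}(0)=\dot u^{[k]}(0)=0$ for $k\ge1$. Uniqueness follows from the local Lipschitz property of the analytic $f$ in the region $|U|<r_1$, $|V|<r_2$ where the solution stays.

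\emph{Main obstacle.} The delicate step is justifying $\sum_kF^{[k]}(t,\hbar)=f(S(t),\dot S(t))$ at $p=1$, that is, the exchange of the power-series expansion of $f$ with the homotopy series. I would handle this by absolute convergence of the double series $\sum a_{\mathfrak n\mathfrak m}(\sum_ku^{[k]})^{\mathfrak n}(\sum_k\dot u^{[k]})^{\mathfrak m}$. This double series is dominated by $\widetilde f(U_0+A,V_0+2A)<\infty$, so Fubini/Cauchy-product rearrangement is allowed.
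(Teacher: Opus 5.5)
Your proposal is correct and follows the paper's proof: derive $\widetilde T(A)\le A$ from \eqref{eq:30}--\eqref{eq:31}, apply Lemma~11, control the derivative series through \eqref{eq:29}, and pass to the limit in the recurrences to obtain $\cL_1[u]=(1+\hbar)\cL_1[u]-\hbar f(u,\dot u)$, hence $\cL_1[u]=f(u,\dot u)$; your convexity argument, using that $\zeta_s$ is non-decreasing in $s$, supplies the step $\widetilde T(A)\le A$ that the paper only asserts. One harmless slip: the triangle inequality applied to $\ddot u^{[k]}=R_k-2\dot u^{[k]}-u^{[k]}$ with \eqref{eq:29} gives $\|\ddot u^{[k]}\|_\infty\le 6\rho_k$ (as in the paper), not $4\rho_k$, and this does not affect the conclusion.
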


\begin{proof}
From conditions \eqref{eq:30}, \eqref{eq:31} we obtain $\widetilde T(A)\le A$. By Lemma~11,
\[
\sum_{k=1}^{\infty}\rho_k<\infty.
\]
From \eqref{eq:29} and
\[
\ddot u^{[k]}=R_k-2\dot u^{[k]}-u^{[k]},\qquad k\ge1,
\]
we get
\[
\|\ddot u^{[k]}\|_\infty\le6\rho_k,
\]
\[
\sum_{k=0}^{\infty}\|F^{[k]}\|_\infty
\le\widetilde f(U_0+A,V_0+2A)<\infty,
\qquad
\sum_{k=0}^{\infty}F^{[k]}=f(u,\dot u).
\]
Passing to the limit in the recurrence equations gives
\[
\cL_1[u]=(1+\hbar)\cL_1[u]-\hbar f(u,\dot u),
\]
whence $\cL_1[u]=f(u,\dot u)$ for $\hbar\ne0$. The initial conditions hold by construction, and uniqueness follows from the analyticity of $f$.
\end{proof}

For $-1\le\hbar<0$ we have $\kappa=1$, whereas for $-2<\hbar<-1$ we have $\kappa>1$. Hence, within this majorant estimate, the interval $[-1,0)$ gives the least restrictive sufficient conditions for convergence.

For problem \eqref{eq:22},
\[
f(u,\dot u)=-u\dot u-u^2,
\qquad
\widetilde f(U,V)=UV+U^2
\]
we have
\[
F^{[k]}=-\sum_{j=0}^{k}u^{[j]}
\left(\dot u^{[k-j]}+u^{[k-j]}\right).
\]
The first terms of the series have the form
\[
u^{[0]}(t)=(u_0+v_0t)e^{-t},
\]
\[
u^{[1]}(t,\hbar)=\hbar v_0\left[P(t)e^{-t}+Q(t)e^{-2t}\right],
\]
where
\[
P(t)=-u_0-2v_0+(u_0+v_0)t,
\qquad
Q(t)=u_0+2v_0+v_0t.
\]
Set
\[
c_2=-\frac{2u_0^2+6u_0v_0+3v_0^2}{4},
\qquad
c_3=\frac{6u_0^2+28u_0v_0+17v_0^2}{8},
\]
\[
D_1(t)=u_0^2+4u_0v_0+2v_0^2+2v_0(u_0+v_0)t,
\]
\[
K_1(t)=\frac{2u_0^2+4u_0v_0-v_0^2}{8}
+\frac{u_0v_0+v_0^2}{2}t+\frac{v_0^2}{4}t^2,
\]
\[
A_2(t)=(1+\hbar)\hbar v_0P(t)-\hbar^2v_0(c_3+c_2t),
\]
\[
A_3(t)=(1+\hbar)\hbar v_0Q(t)+\hbar^2v_0D_1(t).
\]
Then
\[
u^{[2]}(t,\hbar)=A_2(t)e^{-t}+A_3(t)e^{-2t}
-\hbar^2v_0K_1(t)e^{-3t}.
\]
For $-1\le\hbar<0$ conditions \eqref{eq:30}, \eqref{eq:31} take the form
\[
4U_0+V_0+6A<1,
\]
\[
U_0V_0+U_0^2\le[1-4U_0-V_0-6A]A.
\]
Setting
\[
a_2=1-4U_0-V_0,
\qquad
a_3=24(U_0V_0+U_0^2),
\]
for $a_2>0$, $a_2^2\ge a_3>0$ we obtain
\[
A\in\left[
\frac{a_2-\sqrt{a_2^2-a_3}}{12},
\frac{a_2+\sqrt{a_2^2-a_3}}{12}
\right]\cap(0,V_{\max}).
\]
In the case under consideration $V_{\max}=+\infty$. For $U_0=V_0=0$ we have the trivial solution $u(t)\equiv0$.

Thus the Cauchy problem for system \eqref{eq:20}, under the stated conditions, admits a convergent homotopy representation of its solution on $[0,+\infty)$.
\end{example}

\begin{example}
Consider the Cauchy problem for system \eqref{eq:21} with
\[
u_0=-0.05,\qquad v_0=-0.03.
\]
From \eqref{eq:28} we have
\[
U_0\approx0.0610363832,\qquad V_0\approx0.0310363832.
\]
From conditions \eqref{eq:30}, \eqref{eq:31} we obtain
\[
\hbar_{\min}\le\hbar<0,
\qquad
\hbar_{\min}\approx-1.21770410.
\]
For each fixed $\hbar$ from this interval, the homotopy series of problem \eqref{eq:22}, \eqref{eq:23}, together with the series of its first and second derivatives, converges absolutely and uniformly on $[0,+\infty)$.

For a practical choice of $\hbar$ consider the second-order approximation
\[
u_2(t,\hbar)=u^{[0]}(t)+u^{[1]}(t,\hbar)+u^{[2]}(t,\hbar)
\]
and set, at $t=1$,
\[
\Psi_1(\hbar)=u_2(1,\hbar),
\]
and additionally consider
\[
\Psi_2(\hbar)=\left.\frac{\partial u_2(t,\hbar)}{\partial t}\right|_{t=1}.
\]
The functions $\Psi_1(\hbar)$, $\Psi_2(\hbar)$ attain their minima, respectively, at
\[
\hbar_{\Psi_1}^*\approx-1.015527,
\qquad
\hbar_{\Psi_2}^*\approx-1.032126.
\]
For $-1.231<\hbar<-0.739$ their values, rounded to four decimal places, equal $-0.0297$ and $0.0182$ respectively. Taking into account the conditions of Theorem~3, we choose the working interval
\[
\hbar_{\min}\le\hbar<-0.739.
\]

\begin{figure}[htbp]
\centering
\includegraphics[width=0.58\textwidth]{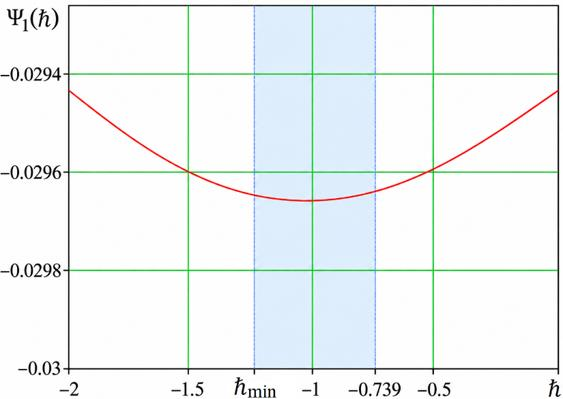}
\caption{The $\hbar$-curve $\Psi_1(\hbar)=u_2(1,\hbar)$, obtained at the second order of approximation. The working interval $[\hbar_{\min},-0.739)$ is highlighted.}
\label{fig:hcurve}
\end{figure}

For $\hbar=-1$ the second-order approximation has the form
\[
\begin{aligned}
u_2(t,-1)\approx{}&-(0.046428875+0.03252525t)e^{-t}\\
&-(0.003609+0.001044t)e^{-2t}\\
&+(0.000037875+0.000036t+0.00000675t^2)e^{-3t}.
\end{aligned}
\]
The original variables are recovered from the formulas
\[
x_1(t)\approx1+u_2(t,-1),
\qquad
y_1(t)\approx1+u_2(t,-1)+\dot u_2(t,-1).
\]
Figure~\ref{fig:numcomp} shows their comparison with the numerical solutions $X_1,Y_1$ of system \eqref{eq:20}, obtained using MathCAD with initial conditions $x_1(0)=0.95$, $y_1(0)=0.97$.

\begin{figure}[htbp]
\centering
\includegraphics[width=0.94\textwidth]{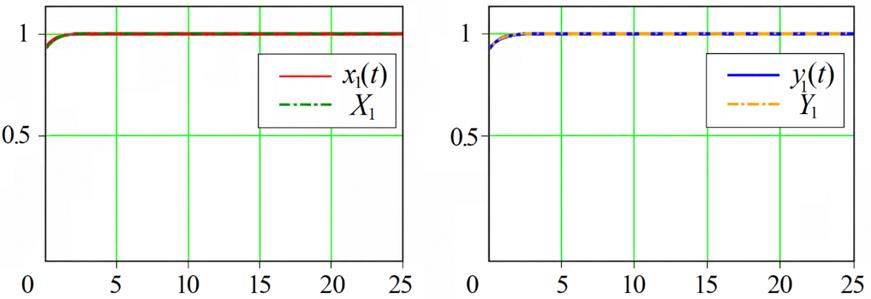}
\caption{Graphs of the numerical and approximate analytical solutions of system \eqref{eq:20}.}
\label{fig:numcomp}
\end{figure}

In the numerical comparison on the interval $[0,7]$ the absolute discrepancy for both components is less than $2.2402\cdot10^{-7}$. On the interval $(7,25]$ it does not exceed $2.3290\cdot10^{-8}$.
\end{example}

Let us now consider an example in which the homotopy series admits exact summation.

\begin{example}
We study the Cauchy problem for the continuous analogue of $W_1$ \cite{ref10}. On $\Om_2$ the system has the form
\[
\begin{cases}
\dot x_1=x_2y_1,\\
\dot x_2=x_2y_2-x_2,\\
\dot y_1=x_1+x_2y_1-y_1,\\
\dot y_2=x_2y_2-y_2,
\end{cases}
\quad
(x_1(0),x_2(0),y_1(0),y_2(0))=(x_1^0,x_2^0,y_1^0,y_2^0)\in\Om_2.
\]
For this problem an exact analytical solution was obtained in \cite{ref26}.

Taking into account the simplex conditions, we get
\[
x_2(t)-y_2(t)=c_4e^{-t},
\qquad c_4=x_2^0-y_2^0.
\]
Consequently, $y_2$ satisfies the Riccati equation
\[
\dot y_2+(1-c_4e^{-t})y_2-y_2^2=0,
\qquad y_2(0)=y_2^0.
\]
Set
\[
\cL_2[\varphi]=\dot\varphi+(1-c_4e^{-t})\varphi,
\qquad
E_2(t)=\exp[-t+c_4(1-e^{-t})].
\]
At $\hbar=-1$ the deformation equation takes the form
\[
\cL_2[Y_2]=pY_2^2,
\qquad Y_2(0,p)=y_2^0.
\]
\[
Y_2(t,p)=\sum_{k=0}^{\infty}y_2^{[k]}(t)p^k,
\qquad y_2^{[0]}(t)=y_2^0E_2(t),
\]
and the recurrence equations have the form
\[
\cL_2[y_2^{[k]}]
=\sum_{j=0}^{k-1}y_2^{[j]}y_2^{[k-1-j]},
\qquad y_2^{[k]}(0)=0,\quad k\ge1.
\]
Denote
\[
I_3(t)=\int_0^tE_2(s)\,ds
=\begin{cases}
\displaystyle\frac{\exp[c_4(1-e^{-t})]-1}{c_4},&c_4\ne0,\\[2ex]
1-e^{-t},&c_4=0.
\end{cases}
\]
Since
\[
\cL_2[E_2(t)I_3^k(t)]=kE_2^2(t)I_3^{k-1}(t),\qquad k\ge1,
\]
by induction
\[
y_2^{[k]}(t)=(y_2^0)^{k+1}E_2(t)I_3^k(t),
\qquad k=0,1,\ldots.
\]
For interior initial data,
\[
E_2(t)\le e^{-[1-\max\{c_4,0\}]t},
\qquad
\sup_{t\ge0}y_2^0I_3(t)
\le\frac{y_2^0}{1-\max\{c_4,0\}}<1.
\]
Hence, for $p=1$ the geometric series converges absolutely and uniformly on $[0,+\infty)$, and
\[
y_2(t)=\sum_{k=0}^{\infty}y_2^{[k]}(t)
=\frac{y_2^0E_2(t)}{1-y_2^0I_3(t)}.
\]
The remaining components are recovered from the equalities
\[
x_2(t)=y_2(t)+c_4e^{-t},
\qquad x_1(t)=1-x_2(t),
\qquad y_1(t)=1-y_2(t).
\]
This formula coincides with the corresponding exact solution from \cite{ref26}.

An analogous construction at $\hbar=-1$ applies to the problem
\[
\dot u(t)+f_1(t)u(t)-\beta_1u^2(t)=0,
\qquad u(0)=u_0,
\]
where
\[
f_1(t)\in C[0,T],\qquad \beta_1,u_0\in\R.
\]
Set
\[
E_3(t)=\exp\left(-\int_0^t f_1(s)\,ds\right),
\qquad
I_4(t)=\int_0^tE_3(s)\,ds.
\]
If
\[
|\beta_1u_0|I_4(T)<1,
\]
then the homotopy series converges uniformly on $[0,T]$, and its sum equals
\[
u(t)=\frac{u_0E_3(t)}{1-\beta_1u_0I_4(t)}.
\]
\end{example}

\section{Explicit analytical solutions of system \eqref{eq:5} in special cases}

Under special relations among the heredity coefficients, the Cauchy problem for system \eqref{eq:5} admits explicit analytical solutions. Let $z^0\in\Om$, $t\ge0$. Auxiliary notation is introduced separately in each case.

For $1\le j,k\le n$, $j\ne k$, set
\[
a_{jk}^{(f)}=p_{jk,j}^{(f)}+p_{kj,j}^{(f)}-1,
\qquad
a_{jk}^{(m)}=p_{jk,j}^{(m)}+p_{kj,j}^{(m)}-1,
\]
and set $a_{jj}^{(f)}=a_{jj}^{(m)}=0$.

\medskip
\noindent\textbf{Case 1 (single-block class).}
Let
\[
n=\nu=N_0,\qquad x_j^0=y_j^0=s_j^0,\qquad s^0\in S^{N_0-1},
\]
and for $j\ne k$,
\[
a_{jk}^{(f)}=a_{jk}^{(m)}=\mathfrak q_j- \mathfrak q_k,
\qquad |\mathfrak q_j- \mathfrak q_k|\le1.
\]
Then
\[
x_j(t)=y_j(t)=
\frac{s_j^0e^{\mathfrak q_jt}}{\displaystyle\sum_{k=1}^{N_0}s_k^0e^{\mathfrak q_kt}},
\qquad j=1,\ldots,N_0.
\]

\medskip
\noindent\textbf{Case 2 (two interacting blocks).}
Let
\[
n=\nu=N_0,\qquad
\{1,\ldots,N_0\}=\Lambda\sqcup\Pi,
\qquad \Lambda,\Pi\ne\varnothing,
\]
\[
x_j^0=y_j^0=s_j^0,\qquad s^0\in S^{N_0-1},
\]
and for $i\ne k$,
\[
a_{ik}^{(f)}=a_{ik}^{(m)}=a_{ik},
\]
where
\begin{equation}
\label{eq:32}
a_{ik}=
\begin{cases}
\mathfrak q_i- \mathfrak q_k,&i,k\in\Lambda,\\
\mathfrak r_i- \mathfrak r_k,&i,k\in\Pi,\\
\sigma,&i\in\Lambda,\ k\in\Pi,\\
-\sigma,&i\in\Pi,\ k\in\Lambda,
\end{cases}
\end{equation}
with
\begin{equation}
\label{eq:33}
|\mathfrak q_i- \mathfrak q_k|\le1\quad(i,k\in\Lambda),\qquad
|\mathfrak r_i- \mathfrak r_k|\le1\quad(i,k\in\Pi),\qquad
|\sigma|\le1.
\end{equation}
Denote
\[
U_{01}=\sum_{i\in\Lambda}s_i^0
\]
and suppose $0<U_{01}<1$, $\sigma\ne0$. Then
\[
\begin{cases}
\displaystyle
x_i(t)=y_i(t)=U(t)
\frac{s_i^0e^{\mathfrak q_i\tau_\Lambda(t)}}{\displaystyle\sum_{\ell\in\Lambda}s_\ell^0e^{\mathfrak q_\ell\tau_\Lambda(t)}},
&i\in\Lambda,\\[3ex]
\displaystyle
x_j(t)=y_j(t)=[1-U(t)]
\frac{s_j^0e^{\mathfrak r_j\tau_\Pi(t)}}{\displaystyle\sum_{\ell\in\Pi}s_\ell^0e^{\mathfrak r_\ell\tau_\Pi(t)}},
&j\in\Pi,
\end{cases}
\]
where
\[
U(t)=\frac{U_{01}e^{\sigma t}}{1-U_{01}+U_{01}e^{\sigma t}},
\]
\[
\tau_\Lambda(t)=\frac1\sigma\ln\left(1-U_{01}+U_{01}e^{\sigma t}\right),
\qquad
\tau_\Pi(t)=t-\tau_\Lambda(t).
\]
For $\sigma=0$,
\[
U(t)=U_{01},\qquad
\tau_\Lambda(t)=U_{01}t,
\qquad
\tau_\Pi(t)=(1-U_{01})t.
\]

\medskip
\noindent\textbf{Case 3 (macro-group subcase with a double root).}
Let
\[
n=\nu=N_0,\qquad
\{1,\ldots,N_0\}=\Lambda\sqcup\Pi,
\qquad \Lambda,\Pi\ne\varnothing,
\qquad a,b,c,d\in[0,1].
\]
For $i,k$ within the same group set
\[
p_{ik,i}^{(f)}=p_{ik,k}^{(m)}=1.
\]
And for $i\in\Lambda$, $k\in\Pi$,
\[
p_{ik,i}^{(f)}=a,\qquad p_{ik,i}^{(m)}=c,
\]
and for $i\in\Pi$, $k\in\Lambda$,
\[
p_{ik,k}^{(f)}=b,\qquad p_{ik,k}^{(m)}=d.
\]
Suppose that for some $\varkappa>0$,
\[
1-a=\varkappa c,\qquad
b=\varkappa(1-d),\qquad
1-a\ne b.
\]
Set
\[
U_{02}=\sum_{i\in\Lambda}x_i^0,
\qquad
V_{02}=\sum_{i\in\Lambda}y_i^0,
\]
and
\[
x_i^0=\alpha_{0i}U_{02},\qquad
y_i^0=\alpha_{0i}V_{02}\quad(i\in\Lambda),
\]
\[
x_j^0=\beta_{0j}(1-U_{02}),\qquad
y_j^0=\beta_{0j}(1-V_{02})\quad(j\in\Pi),
\]
where
\[
\alpha_{0i},\beta_{0j}\ge0,
\qquad
\sum_{i\in\Lambda}\alpha_{0i}=\sum_{j\in\Pi}\beta_{0j}=1.
\]
Denote
\[
H_4=(1-a)(\varkappa-U_{02}-\varkappa V_{02})
+b(U_{02}+\varkappa V_{02}+1),
\qquad
r_3=\frac{H_4}{2(a+b-1)}.
\]
If
\[
H_4^2-4b(a+b-1)(U_{02}+\varkappa V_{02})=0,
\]
then

where
\[
U(t)=r_3+
\frac{U_{02}-r_3}{1-\dfrac{b+a-1}{\varkappa}(U_{02}-r_3)t},
\qquad
V(t)=\frac{U_{02}+\varkappa V_{02}-U(t)}{\varkappa}.
\]

\medskip
\noindent\textbf{Case 4 (constant background of additional male types).}
Let
\[
n<\nu,\qquad
E_3=\sum_{l=n+1}^{\nu}y_l^0,
\qquad
0<E_3<1,
\qquad
y_j^0=(1-E_3)x_j^0,\quad j=1,\ldots,n.
\]
For $l>n$ set
\[
p_{il,i}^{(f)}=1,\qquad p_{il,l}^{(m)}=1,
\qquad i=1,\ldots,n.
\]
For $1\le j,k\le n$, $j\ne k$, suppose
\[
a_{jk}^{(f)}=\mathfrak q_j- \mathfrak q_k,
\qquad
a_{jk}^{(m)}=(1-E_3)(\mathfrak q_j-\mathfrak q_k),
\qquad |q_j-q_k|\le1.
\]
Then
\[
\begin{cases}
\displaystyle
x_j(t)=\frac{x_j^0e^{(1-E_3)\mathfrak q_jt}}
{\displaystyle\sum_{\ell=1}^{n}x_\ell^0e^{(1-E_3)\mathfrak q_\ell t}},
&j=1,\ldots,n,\\[3ex]
y_j(t)=(1-E_3)x_j(t),&j=1,\ldots,n,\\
y_l(t)=y_l^0,&l=n+1,\ldots,\nu.
\end{cases}
\]

\medskip
\noindent\textbf{Case 5 (basic types, a distinguished type, and additional male types).}
Let $n<\nu$, $n\ge3$,
\[
\{1,\ldots,n-1\}=\Lambda\sqcup\Pi,
\qquad \Lambda,\Pi\ne\varnothing,
\qquad x_i^0=y_i^0=s_i^0,\quad i<n,
\]
\[
\sum_{i=1}^{n-1}s_i^0=U_{03},
\qquad 0<U_{03}<1,
\qquad x_n^0=1-U_{03},
\qquad y_n^0+\sum_{l=n+1}^{\nu}y_l^0=1-U_{03}.
\]
For $1\le i,k<n$ define the matrix $a_{ik}$ according to \eqref{eq:32}, \eqref{eq:33}. For $i<k<n$ set
\[
p_{ik,i}^{(f)}=p_{ki,i}^{(f)}=p_{ik,i}^{(m)}=p_{ki,i}^{(m)}
=\frac{1+a_{ik}}{2},
\]
\[
p_{ik,k}^{(f)}=p_{ki,k}^{(f)}=p_{ik,k}^{(m)}=p_{ki,k}^{(m)}
=\frac{1-a_{ik}}{2}.
\]
Interactions with type $n$ are defined by
\[
p_{in,i}^{(f)}=p_{in,n}^{(m)}=p_{ni,n}^{(f)}=p_{ni,i}^{(m)}=1,
\qquad i<n.
\]
For $l>n$ set
\[
p_{il,i}^{(f)}=p_{il,l}^{(m)}=1,
\qquad i<n,
\]
\[
p_{nl,n}^{(f)}=1,
\qquad
p_{nl,l}^{(m)}=1-\eta_l,
\qquad
p_{nl,n}^{(m)}=\eta_l,
\qquad
0<\eta_l\le1.
\]
The remaining probabilities are determined by normalization and the Volterra condition.

Let
\[
U_{01}=\sum_{i\in\Lambda}s_i^0,
\qquad 0<U_{01}<U_{03}\quad\text{and}\quad\sigma\ne0.
\]
Then
\[
\begin{cases}
\displaystyle
x_i(t)=y_i(t)=\overline U(t)
\frac{s_i^0e^{\mathfrak q_i\tau_\Lambda(t)}}
{\displaystyle\sum_{\ell\in\Lambda}s_\ell^0e^{\mathfrak q_\ell\tau_\Lambda(t)}},
&i\in\Lambda,\\[3ex]
\displaystyle
x_i(t)=y_i(t)=[U_{03}-\overline U(t)]
\frac{s_i^0e^{\mathfrak r_i\tau_\Pi(t)}}
{\displaystyle\sum_{\ell\in\Pi}s_\ell^0e^{\mathfrak r_\ell\tau_\Pi(t)}},
&i\in\Pi,\\[3ex]
x_n(t)=1-U_{03},&\\
y_l(t)=y_l^0e^{-\eta_l(1-U_{03})t},&l>n,\\[1ex]
\displaystyle
y_n(t)=y_n^0+\sum_{l=n+1}^{\nu}y_l^0
\left(1-e^{-\eta_l(1-U_{03})t}\right).&
\end{cases}
\]
where
\[
\overline U(t)=
\frac{U_{03}U_{01}e^{\sigma U_{03}t}}
{U_{03}-U_{01}+U_{01}e^{\sigma U_{03}t}},
\]
\[
\tau_\Lambda(t)=\frac1\sigma
\ln\left(
\frac{U_{03}-U_{01}+U_{01}e^{\sigma U_{03}t}}{U_{03}}
\right),
\qquad
\tau_\Pi(t)=U_{03}t-\tau_\Lambda(t).
\]
For $\sigma=0$:
\[
\overline U(t)=U_{01},
\qquad
\tau_\Lambda(t)=U_{01}t,
\qquad
\tau_\Pi(t)=(U_{03}-U_{01})t.
\]
Note that the cases presented do not exhaust all explicitly integrable special cases of system \eqref{eq:5}.

\section{Conclusion}

The sufficient criterion for absolute and uniform convergence of the homotopy series on $[0,+\infty)$, obtained under the Hurwitz condition for the reduced Jacobian matrix, requires neither reality of the eigenvalues nor their linear independence over $\mathbb{Q}$. For the continuous analogue of $W_1$ \cite{ref10} at $\hbar=-1$ and interior initial data, the series admits exact summation and reproduces the known analytical solution.

The majorant scheme is applicable to the quadratic-linear systems considered here, as well as to problems with analytic nonlinearity, provided the corresponding convergence conditions are satisfied. These conditions may restrict the initial data and parameters; however, their failure does not by itself imply divergence of the series. The explicit solutions obtained for special cases are of independent significance and do not rely on the majorant conditions.

The continuous analogue should be regarded as an independent model, rather than as an unconditionally equivalent substitute for the discrete VQSOBP. Consequently, the transfer of conclusions concerning stability, limiting behavior, and periodic regimes between the two models requires separate justification. The conditions established on the coefficients allow one to compare discrete and continuous dynamics in the cases considered, but they do not provide a complete classification for arbitrary coefficients.

\end{document}